\documentclass{siamart220329}

\usepackage{mathtools}
\usepackage{amsfonts,latexsym,amssymb,amscd,epsf}
\usepackage[utf8]{inputenc}
\usepackage{bbm}
\usepackage{tikz}
\usepackage{verbatim}
\usepackage{subcaption}
\usepackage{xargs}                      

\usepackage{algorithm}
\usepackage{algpseudocode}
\algnewcommand{\LineComment}[1]{\Statex \(\%\) \small \textit{#1} \(\%\)}

\DeclareMathAlphabet{\mathdutchcal}{U}{dutchcal}{m}{n}
\SetMathAlphabet{\mathdutchcal}{bold}{U}{dutchcal}{b}{n}
\DeclareMathAlphabet{\mathdutchbcal}{U}{dutchcal}{b}{n}
\newtheorem{remark}[theorem]{Remark}
\newtheorem{example}[theorem]{Example}

 \newcommand{\I}{\mathbb{I}}
 \newcommand{\N}{\mathbb{N}}
 \newcommand{\bigO}{\mathcal{O}}
\usepackage{tensor}

\usepackage{graphicx}

\usepackage{pgfplots,relsize}
\pgfplotsset{compat=1.14}
\graphicspath{ {./figures/} }
\usepackage{tikz}
\usetikzlibrary{hobby, shapes}
\usetikzlibrary{quotes,angles,positioning}
\usetikzlibrary{calc}
\usetikzlibrary{matrix}
\usetikzlibrary{patterns}

\pgfplotsset{grid style={line width=0.05pt, gray}}
\pgfplotsset{minor grid style={gray}}
\pgfplotsset{major grid style={gray}}

\DeclareMathOperator*{\argmax}{arg max}

\newcommand{\hypref}[2]{\hyperref[#2]{#1 \ref*{#2}}}

\usepackage{etoolbox} 

\usepackage{extramath}

\definecolor{matlabred}{rgb}{0.9047,    0.1918,    0.1988}
\definecolor{matlabblue}{rgb}{0.2941    0.5447    0.7494}
\definecolor{matlabgreen}{rgb}{	0.3718    0.7176    0.3612}
\definecolor{matlaborange}{rgb}{1.0000    0.5482    0.1000}
\usepackage{todonotes}
\newcommand{\mat}[1]{\ensuremath{{\bm{{#1}}}}}

\renewcommand{\vec}[1]{\underline{\bm{#1}}}

\newcommand{\ttvec}[1]{\mathdutchbcal{#1}}
\newcommand{\ttmat}[1]{\bm{\mathcal{#1}}}

\newcommand{\sssd}{{{\sc s}{\sc \small s}--{\sc {sd}}}}
\newcommand{\sscg}{{{\sc s}{\sc \small s}--{\sc {cg}}}}

\newcommand{\tucksscg}{{\sc Tk}--\sscg}
\newcommand{\tucksssd}{{\sc Tk}--\sssd}

\newcommand{\pfft}{{\sc P}--{\sc\small FFT}}
\newcommand{\peig}{{\sc P}--{\sc\small E}{\sc \small ig}}
\newcommand{\pio}{{\sc P}--{\sc\small I}{\sc \small nn}{\sc\small O}{\sc \small ut}}

\headers{Subspace gradient descent for linear tensor equations}{M. Iannacito, 
L. Piccinini and V. Simoncini}
\title{Subspace gradient descent methods for linear tensor equations\thanks{Version of 
February 24, 2026.}}
\author{Martina Iannacito\thanks{Dipartimento di Matematica and (AM)$^2$,
		Alma Mater Studiorum Universit\`a di Bologna,
		Piazza di Porta San Donato  5, I-40127 Bologna, Italy,
		{\tt \{martina.iannacito\}\{lorenzo.piccinini12\}\{valeria.simoncini\}@unibo.it}}\and Lorenzo Piccinini$^*$
	\and Valeria Simoncini$^*$\thanks{IMATI-CNR, Pavia, Italy. }}

\ifpdf
\hypersetup{pdftitle={Tensor-SSCG}}
\fi

\begin{document}
	
	\maketitle	
	
	\begin{abstract}
The numerical solution of algebraic tensor equations
is a largely open and challenging task.
Assuming that the operator is symmetric and positive definite, we propose two
new gradient-descent type methods for tensor equations that 
generalize the recently proposed Subspace Conjugate Gradient (\textsc{S{\small s}--{cg}}), 
D. Palitta et al, SIAM J. Matrix Analysis and Appl (2025).
As our interest is mainly in a modest number of tensor modes,
the Tucker format is used to efficiently represent low-rank tensors. Moreover,
mixed-precision strategies are employed in certain subtasks to
improve the memory usage, and different preconditioners are applied to enhance
convergence.
The potential of our strategies is illustrated by experimental results on
tensor-oriented discretizations of three-dimensional partial differential equations
with separable coefficients.
Comparisons with  the state-of-the-art {\it Alternating Minimal Energy} (AMEn) algorithm
confirm the competitiveness of the proposed strategies.
\end{abstract}

%
%
%

\section{Introduction}
We are interested in solving the tensor equation 
\begin{equation} \label{eq:Ax=c}
	\ttmat{A}(\ttvec{x}) = \ttvec{c}
\end{equation}
 where $\ttmat{A}$ is a sum of tensorized operators acting on  
$\R^{n_1\times \cdots \times n_d}$, and is symmetric and positive definite, in
a sense that will be defined later; moreover, 
$\ttvec{x}$ and $\ttvec{c}$ are tensors of appropriate order and size. 
As an example, in the case $d=3$, the Kronecker form of (\ref{eq:Ax=c}) reads
$$
(\sum_{i=1}^\ell \mat{C}_i\otimes \mat{B}_i \otimes \mat{A}_i ) \vec{x} = \vec{c},
$$
where $\mat{A}_i, \mat{B}_i, \mat{C}_i$ are matrices and $\vec{x}$, 
$\vec{c}$ 
are vectors of conforming dimension.

Tensor equations naturally appear in several domains such as quantum 
chemistry~\cite{Lubich2008,Meyer2009}, financial mathematics~\cite{SLOAN19981, Wang2005}, 
imaging~\cite{Kilmer2013}, 
deep neural networks~\cite{Ji2019MLten}; we refer to~\cite{GrasedyckKT2013GAMM} for additional
established applications. 
Tensor equations  also arise  whenever certain discretizations of (parameterized)
partial differential equations
of dimension three or higher are employed,  see, e.g.,
\cite{Simoncini.survey.16, LOLI20202586, Kressner2011}.
	
Typical solvers for tensor equations
use a low-rank format to represent the objects involved in the computations. 
Among these, we recall the Tucker decomposition~\cite{Tucker1966}, the Canonical 
Polyadic Decomposition (CP)~\cite{Hitchcock1927cpd, Kolda2009TD}, the Hierarchical Tucker 
(HT)~\cite{Hackbusch2009HT, Grasedyck2009HT}, and the Tensor-Train 
(TT)~\cite{White1992MPS, Oseledets2011}. 
Moreover, methods can be grouped in two main categories.
In the first class, we include all methods that formulate the equation as an optimization problem. The 
solution is assumed to belong to a specific low-rank tensor set, whose specific constraints are used to design an optimization strategy. For instance, ALS-based methods optimize each factor of the chosen 
low-rank representation of $\ttvec{x}$, while keeping the others constant. 
Among widely used methods we recall 
the Density Matrix Re-orthogonalization Group (DMRG) method~{\cite{White1992MPS}} and its variants 
such as MALS~\cite{Holtz2012MALS}, and Riemannian optimization techniques
such as~\cite{NIPS2011_37bc2f75}, whose low-rank sets are Riemannian manifolds.
The second family includes iterative methods adapted from classical vector iterations. 
Examples of such algorithms are power methods combined with Canonical Polyadic 
tensor decomposition~\cite{Beylkin2005}, Krylov based methods combined with different 
low-rank tensor representations, for example CG and BiCG with 
H-Tucker~\cite{Kressner2011,Tobler2012}, and GMRES with 
TT~\cite{Dolgov2013TTGMRES, doi:10.1137/24M1694999}, 
see~\cite{GrasedyckKT2013GAMM} for a richer account.
In between these two classes, there is the Alternating Minimal Energy (AMEn) 
algorithm~\cite{Dolgov2014AMEn} that merges the ALS scheme with an algebraic solver, all in
 the TT format. This method is one of the most robust and efficient
solvers for tensor equations both in the symmetric and nonsymmetric cases.
%
Existing optimization-based solvers, such as AMEn, may experience lack of convergence 
guarantees and the risk of swamping in local minima, whereas classical 
iterative solvers usually offer a more transparent management of memory, 
accuracy control and possibilities for parallelization.

In this work, we propose to combine the idea of subspace gradient-descent strategies 
(such as in the matrix algorithm \sscg ~\cite{Palitta2025SsCG}) with the Tucker format to
attack the symmetric tensor equation. The matrix subspace approach developed in \cite{Palitta2025SsCG}
generates a gradient-descent type recurrence for the approximate solution, in which the
coefficient of the new direction is determined by subspace projection.
  In our setting, the choice of the Tucker format is natural for two reasons. First, when
 dealing with few modes,
the Tucker representation is quite competitive with respect to other formats that can handle massive
numbers of modes, such as Tensor Train. Second, this representation makes it easier to identify 
the projection subspaces that are used  to update the sequence iterates.
We derive two closely related approaches, which may be viewed as the subspace tensor oriented
versions of the steepest descent and conjugate gradient methods, from which we borrow the names.
Our methods are equipped with various features to improve performance, such as preconditioning,
mixed-precision computations and inexact inner solves.

Preliminary numerical experiments on various three-dimensional discretizations by finite
differences of elliptic self-adjoint partial differential equations with
separable coefficients are reported,
to illustrate the competitiveness of our newly designed solvers with respect to the 
well established AMEn method.

The manuscript is organized as follows. 
In section~\ref{sec:prelim} the employed notation for operations with tensors is
introduced.
Section~\ref{sec:subspace-CG} and its subsections
 describe the derivation of the class
of methods in tensor format. 
Section~\ref{sec:Impl} describes the implementation, first 
giving details on the use of the Tucker format within the iterative solver, and
then formalizing the computation of the recurrence coefficients. Section~\ref{sec:final} 
presents the final algorithms; its subsections describe the use of
different preconditioning strategies. Section~\ref{sec:experiments} reports
 on our experimental experience with the new algorithms. Finally, section~\ref{sec:conclusions} contains some conclusions.

\subsection{Notation and preliminaries}\label{sec:prelim}
Low-case, underlined bold letters denote vectors ($\vec{x}$), while capital, 
bold letters ($\mat{A}$) denote matrices.
Low-case, calligraphic, bold letters ($\ttvec{x}$) stand for order-$d$ tensors, either in full or in compressed format, of size $({n}_1,\dots, n_d)$. 
To shorten the notation, we say that order-$d$ tensors belong to $\R^{\vec{n}}$ where $\vec{n} = (n_1,\dots, n_d)$. {Upper-case, calligraphic, bold letters ($\ttmat{A}$) stand for linear operators among tensor spaces.} The identity matrix of dimension $n\times n$ is 
denoted $\I$, as long as the size is clear from the context. 
In the following we will make great use of the Tucker format for tensors. In particular, a $d$-mode tensor
can be represented as $\ttvec{y}=\mat{U}_{1}\otimes \cdots \otimes \mat{U}_{d} \ttvec{x}$ where $\ttvec{x}$ is the core 
tensor \cite{Tucker1966, BallardKolda2025}.
The mode-$k$ tensor-times-matrix product between a matrix 
$\mat{U}\in\R^{m_k\times n_k}$ and a tensor $\ttvec{x}\in\R^{\vec{n}}$ is a mode-wise 
tensor-times-matrix (TTM) multiplication denoted as $\ttvec{y} = \ttvec{x}\times_k\mat{U}$, 
where {$\ttvec{y}(\vec{i}) = 
\sum_{j_k=1}^{n_k}\mat{U}(i_k, j_k)\ttvec{x}(i_1, \dots, j_k, \dots, i_d)$}.

Let $\ttvec{x}$ be an order-$d$ tensor of size $\vec{n}$, and $\ttmat{A}$ a multilinear operator between tensor spaces of compatible order and size.
As a linear operator between two vector spaces (tensor space of order-$1$) can be represented by a matrix (tensor of order-$2$), a multilinear operator between tensor spaces of order $d$ can be represented by a tensor of order $2d$. 
Hence, the image of $\ttvec{x}$ through $\ttmat{A}$ corresponds 
to the contraction between the last $d$ modes of $\ttmat{A}$ and all the $d$ modes of $\ttvec{x}$, that is
\begin{equation} \label{eq:Ax}
	\left(\ttmat{A}(\ttvec{x})\right)(\vec{i}) = 			(\ttmat{A}\ttvec{x})(\vec{i}) = \sum_{\vec{j} = \bm{1}}^{\vec{n}} \ttmat{A}(\vec{i}, \vec{j}) \ttvec{x}(\vec{j}) ,
\end{equation}
with $\vec{i}= (i_1,\dots, i_d)$ and $\vec{j} = (j_1,\dots, j_d)$ multilinear indices. This is the standard way to generalize the matrix-vector product to the tensor framework. Similarly we can define the composition of two multilinear operators $\ttmat{A}$ and $\ttmat{P}$ acting on $\R^{\vec{n}}$, that is $\ttmat{A}(\ttmat{P}(\ttvec{x}))$ as the contraction between the last $d$ modes of $\ttmat{A}$ and all the $d$ modes of $\ttmat{P}(\ttvec{x})$ computed as in Equation~\eqref{eq:Ax}, that is
\begin{equation}
	\label{eq:APx}
\bigl(\ttmat{A}\bigl(\ttmat{P}(\ttvec{x})\bigr)\bigr)(\vec{i})= 	\bigl(\ttmat{A}\ttmat{P}\ttvec{x}\bigr)(\vec{i}) = \sum_{\vec{j},\vec{k}=1}^{\vec{n}}\ttmat{A}(\vec{i}, \vec{j})\ttmat{P}(\vec{j}, \vec{k})\ttvec{x}(\vec{k}).
\end{equation}
To simplify the exposition, we will write $\ttmat{A}\ttvec{x}$ and $\ttmat{A}\ttmat{P}\ttvec{x}$, referring to the operations in~\eqref{eq:Ax} and~\eqref{eq:APx}. Additionally, we assume that $\ttmat{A}$ is symmetric and positive definite, that is, $\ttmat{A}(\vec{i}, \vec{j}) = \ttmat{A}(\vec{j}, \vec{i})$ for whatever multilinear indices $\vec{i}$ and $\vec{j}$, and $\langle \ttvec{x}, \ttmat{A}\ttvec{x}\rangle > 0$, where the inner product between tensors is defined as
\[
\langle \ttvec{x} , \ttvec{y} \rangle := \sum_{\vec{i} = \bm{1}}^{\vec{n}} \ttvec{x}(\vec{i}) \ttvec{y}(\vec{i}).
\]

\section{Subspace gradient descent methods for 
multilinear system}\label{sec:subspace-CG}

With the operations formalized above,
we aim at finding the solution $\ttvec{x}\in\R^{\vec{n}}$ to the multilinear system in (\ref{eq:Ax=c})
 using an iterative method.

We follow the classical derivation of gradient descent methods as a procedure to approximate the minimizer  of a convex function. 
Let $\Phi:\R^{\vec{n}}\rightarrow\R$ be defined as
\begin{equation} \label{eq:Phi:X}
	\Phi(\ttvec{x}) =
	\frac 1 2 \langle \ttvec{x}, \ttmat{A}\ttvec{x}\rangle -
	\langle \ttvec{x}, \ttvec{c} \rangle,
\end{equation}
where $\ttmat{A}\ttvec{x}$ is computed as in \eqref{eq:Ax}, and
consider the following minimization problem: Find $\ttvec{x}^*\in\R^{\vec{n}}$ such that
$$
\ttvec{x}^* ={\rm arg} \min_{\ttvec{x}\in\R^{\vec{n}}}\Phi(\ttvec{x}).
$$
%
Starting with a zero initial guess $\ttvec{x}_0\in\R^{\vec{n}}$ and the multilinear operator 
$\ttmat{P}_0:\R^{\vec{s}_k}\rightarrow\R^{\vec{n}}$,
we define the recurrence $\{\ttvec{x}_k\}_{k\ge 0}$
of approximate solutions by means of the following relation
\begin{equation} 
\label{0eq:update:X}
	\ttvec{x}_{k+1} = \ttvec{x}_k + 
	\ttmat{P}_k\bm{\alpha}_k,
\end{equation}
where $\bm{\alpha}_k\in\R^{\vec{s}_k}$ and $\ttmat{P}_k\bm{\alpha}_k$ is computed as in \eqref{eq:Ax}.
We emphasize that $\vec{s}_{k}$, the 
vector of dimensions of the last $d$ modes of $\ttmat{P}_k$, depends on the iteration index $k$ and it may (and will) 
change as the iterations proceed, possibly growing
up  to a certain maximum value, corresponding
to the maximum allowed rank of all iterates.

The corresponding recurrence for the residual is
$$\ttvec{r}_{k+1} = \ttvec{c} - \ttmat{A}\ttvec{x}_{k+1} =\ttvec{c} - \ttmat{A}(\ttvec{x}_{k} + \ttmat{P}_k\bm{\alpha}_k) = \ttvec{r}_k - \ttmat{A}\ttmat{P}_k\bm{\alpha}_k,$$
where $\ttmat{A}\ttmat{P}_k\bm{\alpha}_k$ is computed as in \eqref{eq:APx}.
We require that the tensor operator $\ttmat{P}_k$
satisfies a descent direction
condition completely conforming  to the vector case, that is
\begin{equation}\label{eqn:descent}
	\langle\nabla\Phi(\ttvec{x}_k), \ttmat{P}_k\rangle < 0 .
\end{equation}
To determine $\bm{\alpha}_k$, we
let $\phi(\bm{\alpha}) = \Phi(\ttvec{x}_{k} + \ttmat{P}_k\bm{\alpha})$.
For given $\ttvec{x}_{k}, \ttmat{P}_k$,
at the $k$th iteration we construct $\bm{\alpha}_k$ so that
\begin{equation} \label{eq:min:alpha}
	\phi(\bm{\alpha}_k)=
	\min_{\bm{\alpha}\in\R^{\vec{s}_k}}\phi(\bm{\alpha}) .
\end{equation}



The minimizer $\bm{\alpha}_k$ can be explicitly determined by
solving a multilinear equation of reduced dimensions,
as
described in the following proposition.

\begin{proposition} 
\label{prop:alpha}
The minimizer $\bm{\alpha}_k\in\R^{\vec{s}_k}$ of~\eqref{eq:min:alpha}
is the unique solution of
\begin{equation} \label{eq:prop:alpha:thesis}
		\ttmat{P}_k^{\T}\ttmat{A}\ttmat{P}_k\bm{\alpha} =
		\ttmat{P}_k^{\T}\ttvec{r}_k ,
\end{equation}
where for $\vec{\ell}, \vec{h}\in\R^{\vec{s}_k}$ and $\vec{i}, \vec{j}\in\R^{\vec{n}}$ multilinear indices,
\begin{equation} \label{eq:PAP}
\bigl(\ttmat{P}_k^{\T}\ttmat{A}\ttmat{P}_k\bm{\alpha}\bigr)(\vec{\ell}) = 
\sum_{\vec{i},\vec{j},\vec{h}=\bm{1}}^{\vec{n}}\ttmat{P}_k(\vec{i},\vec{\ell})\ttmat{A}(\vec{i},\vec{j})\ttmat{P}_k(\vec{j},\vec{h})\bm{\alpha}(\vec{h}) .
\end{equation}
\end{proposition}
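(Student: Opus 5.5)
The idea is to treat $\phi$ as a quadratic function of $\bm{\alpha}$, compute its gradient and Hessian explicitly, and use positive definiteness of the reduced operator to get existence, uniqueness, and characterization of the minimizer. Substituting $\ttvec{x}_k + \ttmat{P}_k\bm{\alpha}$ into \eqref{eq:Phi:X} and using the symmetry of $\ttmat{A}$ to merge the two cross terms gives
\begin{equation*}
\phi(\bm{\alpha}) = \Phi(\ttvec{x}_k) + \frac12\langle \ttmat{P}_k\bm{\alpha}, \ttmat{A}\ttmat{P}_k\bm{\alpha}\rangle - \langle \ttmat{P}_k\bm{\alpha}, \ttvec{c}-\ttmat{A}\ttvec{x}_k\rangle
= \Phi(\ttvec{x}_k) + \frac12\langle \bm{\alpha}, \ttmat{P}_k^{\T}\ttmat{A}\ttmat{P}_k\bm{\alpha}\rangle - \langle \bm{\alpha}, \ttmat{P}_k^{\T}\ttvec{r}_k\rangle .
\end{equation*}
Here $\ttmat{P}_k^{\T}$ is the adjoint operator, obtained by contracting over the first $d$ modes of $\ttmat{P}_k$. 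The second equality is the index identity $\langle \ttmat{P}_k\bm{\alpha}, \ttvec{y}\rangle = \sum_{\vec{i},\vec{h}} \ttmat{P}_k(\vec{i},\vec{h})\bm{\alpha}(\vec{h})\ttvec{y}(\vec{i}) = \langle \bm{\alpha}, \ttmat{P}_k^{\T}\ttvec{y}\rangle$. Applying it with $\ttvec{y}=\ttmat{A}\ttmat{P}_k\bm{\alpha}$ yields exactly the entrywise formula \eqref{eq:PAP}.

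Next I differentiate entrywise with respect to $\bm{\alpha}(\vec{\ell})$. Because $\ttmat{A}$ is symmetric, the operator $\ttmat{M}:=\ttmat{P}_k^{\T}\ttmat{A}\ttmat{P}_k$ is symmetric as an order-$2d$ tensor on $\R^{\vec{s}_k}$, i.e.\ $\ttmat{M}(\vec{\ell},\vec{h})=\ttmat{M}(\vec{h},\vec{\ell})$. This can be read off \eqref{eq:PAP} by swapping $\vec{i}$ and $\vec{j}$. Hence $\nabla\phi(\bm{\alpha}) = \ttmat{M}\bm{\alpha} - \ttmat{P}_k^{\T}\ttvec{r}_k$, and the stationarity condition is exactly \eqref{eq:prop:alpha:thesis}.

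The main obstacle is showing that $\ttmat{M}$ is positive definite, which gives both uniqueness and that the stationary point is a minimum. We have $\langle\bm{\alpha},\ttmat{M}\bm{\alpha}\rangle = \langle\ttmat{P}_k\bm{\alpha},\ttmat{A}\ttmat{P}_k\bm{\alpha}\rangle$, which is $>0$ by positive definiteness of $\ttmat{A}$ whenever $\ttmat{P}_k\bm{\alpha}\neq 0$. So I will need $\ttmat{P}_k$ to be injective, i.e.\ of full column rank as a map $\R^{\vec{s}_k}\to\R^{\vec{n}}$. This is implicit in the construction, since in the Tucker setting $\ttmat{P}_k$ has orthonormal factor bases. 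I will state this assumption explicitly.

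With it, $\phi$ is a strictly convex quadratic bounded below, so it has a unique minimizer characterized by $\nabla\phi=0$. Equivalently, one can complete the square:
\begin{equation*}
\phi(\bm{\alpha}) = \text{const} + \frac12\langle \bm{\alpha}-\bm{\alpha}_k, \ttmat{M}(\bm{\alpha}-\bm{\alpha}_k)\rangle ,
\end{equation*}
where $\bm{\alpha}_k$ solves \eqref{eq:prop:alpha:thesis}. This shows $\bm{\alpha}_k$ is the unique minimizer, which concludes the proof.
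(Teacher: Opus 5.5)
Your proposal is correct and follows essentially the same route as the paper: both write $\phi$ as a quadratic in $\bm{\alpha}$, set its gradient $\ttmat{P}_k^{\T}\ttmat{A}\ttmat{P}_k\bm{\alpha}-\ttmat{P}_k^{\T}\ttvec{r}_k$ to zero, and conclude from positive definiteness of the reduced operator; the paper does this after vectorizing, whereas you work directly in index form with the adjoint identity. Your explicit requirement that $\ttmat{P}_k$ be injective fills a step the paper passes over, since it asserts $\vec{y}^{\T}\mat{H}_k\vec{y}>0$ for every nonzero $\vec{y}$ directly from positive definiteness of $\ttmat{A}$; the requirement does hold here, because a Kronecker product of matrices with orthonormal columns again has orthonormal columns.
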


\begin{proof}
We start by explicitly writing the function $\phi$, that is
$$\phi(\bm{\alpha}) =
\frac{1}{2} \langle \ttvec{x}_{k} + \ttmat{P}_{k}\bm{\alpha},
\ttmat{A}\bigl(\ttvec{x}_{k} + \ttmat{P}_{k}\bm{\alpha}\bigr)\rangle -
\langle \ttvec{x}_{k} + \ttmat{P}_{k}\bm{\alpha},\ttvec{c}\rangle.
$$
To ease the derivation, we introduce the vectorization and matricization of tensors and the tensor operators, respectively.
	More precisely, we let
$\vec{c} = \text{vec}(\ttvec{c})\in\R^{n}$ and 
$\vec{x}_k = \text{vec}(\ttvec{x}_k)\in\R^{n}$ for vectorizations, and
	\begin{equation}
		\label{eq:matAmatP}
		\mat{A} = \text{mat}(\ttmat{A})\in\R^{n\times n}\qquad\text{and}\qquad\mat{P}_k = \text{mat}(\ttmat{P}_k)\in\R^{n\times s_k}
	\end{equation}
for matrices, 
	where $n = \prod_{h=1}^{d}n_h$ and $s_k = \prod_{h=1}^{d}s_{k,h}$.
The function $\phi$ transforms into
$\hat{\phi}:\R^{s_k}\rightarrow \R$, defined as 
\[
\hat{\phi}(\vec{\alpha}) = \frac{1}{2} \langle \vec{x}_{k} + \mat{P}_{k}\vec{\alpha},\,
\mat{A}(\vec{x}_{k} + \mat{P}_{k}\vec{\alpha})\rangle -
\langle \vec{x}_{k} + \mat{P}_{k}\vec{\alpha},\vec{c}\rangle .
\]
	
	To find the stationary points of $\hat{\phi}$, we compute the
	partial derivatives of $\hat{\phi}$ with respect to $\vec{\alpha}$: this can
	be done in vector compact form, see, e.g.,
	\cite{Petersen2008}.
	We carry out this computation term by term, getting 
	$$
	\frac{\partial(\vec{x}_k^{\T}\mat{A}(\ttvec{x}_k +
		\mat{P}_{k}\vec{\alpha}))}{\partial \vec{\alpha}}
	=
	\frac{\partial(\vec{x}_{{k}}^{\T}
		\mat{A}\mat{P}_{k}\vec{\alpha})}{\partial \vec{\alpha}}
	= (\vec{x}_k^{\T}\mat{A}\mat{P}_{k})^{\T} = \mat{P}_{k}^{\T}\mat{A}^{\T}\vec{x}_k = \mat{P}_{k}^{\T}\mat{A}\vec{x}_k
	$$
	since $\ttmat{A}$ is a symmetric. Additionally,
	
		\begin{eqnarray*}
			\frac{\partial \, \bigl((\mat{P}_{k}\vec{\alpha})^\T
				\mat{A}\bigl(\vec{x}_{k} + \mat{P}_{k}\vec{\alpha}\bigr)\bigr)}{\partial \vec{\alpha}}&=&
			\frac{\partial\, \bigl(\bigl(\mat{P}_{k}\vec{\alpha}\bigr)^{\T}\mat{A}\vec{x}_{k}\bigr)}{\partial \vec{\alpha}} +\frac{\partial\bigl(\bigl(\mat{P}_{k}\vec{\alpha}\bigr)^{\T}
				\mat{A}\mat{P}_{k}\vec{\alpha}\bigr)}{\partial \vec{\alpha}} \\
			&=&
			\frac{\partial\, \bigl(\vec{\alpha}^{\T}\mat{P}_k^{\T}\mat{A}\vec{x}_{k}\bigr)}{\partial \vec{\alpha}} +\frac{\partial\bigl(\vec{\alpha}^{\T}\mat{P}_{k}^{\T}
				\mat{A}\mat{P}_{k}\vec{\alpha}\bigr)}{\partial \vec{\alpha}} \\
			&=&
			\mat{P}_{k}^{\T}\mat{A}\vec{x}_{k} +
			2\mat{P}_k^{\T}\mat{A}\mat{P}_k \vec{\alpha}.
		\end{eqnarray*}
	%
	%
	Moreover, the derivative of 
$(\mat{P}_k\vec{\alpha}\bigr)^{\T}\vec{c}$ with respect to $\vec{\alpha}$ is given
by $\mat{P}_k^{\T}\vec{c}$, see~\cite[Equations (69), (81)]{Petersen2008}.
	Letting $\vec{r}_k  = \vec{c} - \mat{A}\vec{x}_k = \text{vec}(\ttvec{r}_{k+1})$,
the final expression of the Jacobian of $\hat{\phi}$ with respect to $\vec{\alpha}$ is thus
$$
\frac{\partial \hat{\phi}(\vec{\alpha})}{\partial \vec{\alpha}}  =
	\mat{P}_k^\T \mat{A}\mat{P}_k \vec{\alpha} - \mat{P}_{k}^{\T}\vec{r}_{k}.
$$
The vectorized solution $\vec{\alpha}_k$ of \eqref{eq:prop:alpha:thesis} is a
stationary point of $\hat{\phi}$, and consequently a stationary point of $\phi$.
To ensure that $\vec{\alpha}_k$ is a minimizer, we show that the
Hessian of $\hat{\phi}$, 
$\mat{H}_k = \mat{P}_k^{\T}\mat{A}\mat{P}_k\in\R^{s_k\times s_k}$, is positive definite.
Using the hypothesis on the operator $\ttmat{A}$,
for any nonzero $\vec{y}\in\R^{s_k}$ it holds that
$\vec{y}^\T\mat{H}_k\vec{y}>0$, so that $\mat{H}_k$ is positive definite. 
\end{proof}

\begin{remark}
{\rm 
	The minimization problem in \eqref{eq:min:alpha} can also be recast in terms of an orthogonality condition. 
	Indeed, solving~\eqref{eq:min:alpha} is equivalent to imposing 
	the following \textit{subspace orthogonality condition}
	\begin{equation}
		\label{eq:Rk1orthPk}
		\ttvec{r}_{k+1}\perp\text{range}(\ttmat{P}_k)
	\end{equation}
	where $\text{range}(\ttmat{P}_k) = \{\ttvec{y}\in\R^{\vec{n}} \;\vert\; \ttvec{y} = \ttmat{P}_k\ttvec{x} \text{ for any }\ttvec{x}\in\R^{\vec{s}_k}\}$ with $\ttmat{P}_k\ttvec{x}$ computed as in Equation~\eqref{eq:Ax}. This condition can be reformulated as 
	$\langle \ttmat{P}_k\ttvec{u}, \ttvec{r}_{k+1}\rangle = 0$
	for every $\ttvec{u}\in\R^{\vec{s}_k}$, resulting in $\ttmat{P}_k^{\T}\ttvec{r}_{k+1} = \bm{0}_{\vec{s}_{k}}$, or equivalently 
		$\bm{0}_{\vec{s}_{k}} = \ttmat{P}_k^{\T}(\ttvec{r}_{k+1}) = \ttmat{P}_k^{\T}\bigl(\ttvec{c} - \ttmat{A}\ttvec{x}_{k+1}\bigr)$,
	which corresponds to~\eqref{eq:prop:alpha:thesis}.
	This condition corresponds to~\cite[Equation (3.6)]{Palitta2025SsCG}.
}
\end{remark}

We are left to define how the direction subspace is updated. This
selection will completely define the method, among gradient descent procedures.
The considered choices are described next.

\subsection{Conjugate gradients}
We define the recurrence for the directions $\ttmat{P}_k$ as
$$	
\ttmat{P}_{k+1}\bm{\gamma}_{k+1} = \ttvec{r}_{k+1} + \ttmat{P}_k\bm{\beta}_k
$$
where $\bm{\gamma}_{k+1}\in\R^{\vec{s}_{k+1}}$, and $\ttmat{P}_{k+1}\bm{\gamma}_{k+1}$ 
are computed as in~\eqref{eq:Ax}. The tensor $\bm{\beta}_{k}\in\R^{\vec{s}_{k}}$
	is obtained by imposing that the new directions $\ttmat{P}_{k+1}$ are $\ttmat{A}$-orthogonal with respect to the previous ones, that is
\[
\ttmat{P}_{k+1}\bm{\gamma}_{k+1} \perp_{\ttmat{A}} \text{range} (\ttmat{P}_k),
\] 
or, equivalently,
\begin{equation} \label{eq:beta:subsp-orth}
\ttmat{P}_k^{\T}\ttmat{A}\ttmat{P}_{k+1}\bm{\gamma}_{k+1} = \bm{0}_{\vec{s}_{k}},
\end{equation}
where $	\ttmat{P}_k^{\T}\ttmat{A}\ttmat{P}_{k+1}\bm{\gamma}_{k+1}$ is computed 
as in \eqref{eq:PAP}. Using the vectorization, \eqref{eq:beta:subsp-orth} is written as	
\begin{equation*}	
\text{mat}(\ttmat{P}_k^{\T}\ttmat{A})\text{vec}(\ttmat{P}_{k+1}\bm{\gamma}_{k+1}) = \mat{P}_k^{\T}\mat{A}\mat{P}_{k+1}\text{vec}(\bm{\gamma}_{k+1}) = \vec{0}_{s_{k}}
\end{equation*}
where $s_{k}=\prod {s}_{h,k}$, $\mat{A}$ and $\mat{P}_{k}$ are defined in Equation~\eqref{eq:matAmatP}. 
	
Inserting the expression for $\ttmat{P}_{k+1}\bm{\gamma}_{k+1}$, (\ref{eq:beta:subsp-orth}) becomes
	$\ttmat{P}^{\T}_k \ttmat{A}(\ttvec{r}_{k+1} + \ttmat{P}_k\bm{\beta}) = \bm{0}_{\vec{s}_{k}}$,
	that is,
\begin{equation} \label{eq:beta:thesis}
		\ttmat{P}_k^{\T}\ttmat{A}\ttvec{r}_{k+1}  +
		\ttmat{P}_k^{\T}\ttmat{A}\ttmat{P}_k\bm{\beta}_k = \bm{0}_{\vec{s}_{k}}
\end{equation}
	where $\ttmat{P}_k^{\T}\ttmat{A}\ttvec{r}_{k+1}$ and $\ttmat{P}_k^{\T}\ttmat{A}\ttmat{P}_k\bm{\beta}_k $ are computed as in \eqref{eq:APx} and~\eqref{eq:PAP}, respectively.
	Equation~\eqref{eq:beta:thesis} is a multilinear equation in the unknown $\bm{\beta}_k$, with
	the same multilinear operator used to
	compute $\bm{\alpha}_k$.
	Once again, 
	$\bm{\beta}_k$ is obtained
	by solving a multilinear equation of the same type as the original one, but
	with smaller dimensions, by projecting the problem orthogonally onto $\text{range}(\ttmat{P}_k)$,
	in a matrix sense.

Concerning the quality of the computed direction iterates,
the next proposition shows that the descent direction property (\ref{eqn:descent})
	is maintained.
	
	\begin{proposition}\label{Prop:Discent_direction}
		The tensor ${\ttmat{P}}_{k+1}\bm{\gamma}_{k+1}$ is a descent direction for $\bm{\gamma}_{k+1}\in\R^{\vec{s}_{k+1}}$.
	\end{proposition}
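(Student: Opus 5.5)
The plan is to interpret the descent condition \eqref{eqn:descent} for the new direction as $\langle \nabla\Phi(\ttvec{x}_{k+1}), \ttmat{P}_{k+1}\bm{\gamma}_{k+1}\rangle<0$, and to verify it directly from the recurrence. First I will note that $\nabla\Phi(\ttvec{x}) = \ttmat{A}\ttvec{x}-\ttvec{c}$. This holds because $\ttmat{A}$ is symmetric: vectorize as in the proof of Proposition~\ref{prop:alpha}. In particular, $\nabla\Phi(\ttvec{x}_{k+1}) = -\ttvec{r}_{k+1}$.

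Next I will substitute the direction recurrence $\ttmat{P}_{k+1}\bm{\gamma}_{k+1} = \ttvec{r}_{k+1} + \ttmat{P}_k\bm{\beta}_k$. This gives
\[
\langle \nabla\Phi(\ttvec{x}_{k+1}), \ttmat{P}_{k+1}\bm{\gamma}_{k+1}\rangle
= -\langle \ttvec{r}_{k+1},\ttvec{r}_{k+1}\rangle - \langle \ttvec{r}_{k+1}, \ttmat{P}_k\bm{\beta}_k\rangle .
\]
The key step is to kill the second term. By the Remark following Proposition~\ref{prop:alpha}, the choice of $\bm{\alpha}_k$ as the minimizer in \eqref{eq:min:alpha} is equivalent to the subspace orthogonality condition \eqref{eq:Rk1orthPk}, that is $\ttmat{P}_k^{\T}\ttvec{r}_{k+1}=\bm{0}_{\vec{s}_k}$. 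Since $\ttmat{P}_k\bm{\beta}_k\in\text{range}(\ttmat{P}_k)$, we get $\langle \ttvec{r}_{k+1}, \ttmat{P}_k\bm{\beta}_k\rangle = \langle \ttmat{P}_k^{\T}\ttvec{r}_{k+1},\bm{\beta}_k\rangle = 0$. I would justify this adjoint identity entrywise from the definition \eqref{eq:Ax}, or via the matricizations \eqref{eq:matAmatP}, where it reads $\vec{r}_{k+1}^{\T}\mat{P}_k\vec{\beta}_k = (\mat{P}_k^{\T}\vec{r}_{k+1})^{\T}\vec{\beta}_k$.

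Hence the inner product equals $-\|\ttvec{r}_{k+1}\|^2$, which is strictly negative unless $\ttvec{r}_{k+1}=0$. In that case $\ttvec{x}_{k+1}$ is already the exact solution and the iteration stops, so I will add this nondegeneracy assumption explicitly.

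The main obstacle is bookkeeping rather than substance. First, the notation $\langle\nabla\Phi,\ttmat{P}\rangle$ in \eqref{eqn:descent} must be read correctly: it is the inner product with the actual direction tensor $\ttmat{P}_{k+1}\bm{\gamma}_{k+1}$, not with the operator itself. Second, the transpose/contraction identity for the multilinear operator must be checked to match the index convention in \eqref{eq:PAP}. I will handle both by working in the vectorized form throughout.
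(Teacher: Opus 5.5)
Your proposal is correct and follows the paper's proof essentially step by step: you vectorize to get $\nabla\Phi(\ttvec{x}_{k+1})=-\ttvec{r}_{k+1}$, substitute the direction recurrence, and cancel the cross term using the orthogonality condition \eqref{eq:Rk1orthPk}, which leaves $-\|\ttvec{r}_{k+1}\|^2$. Your explicit caveat $\ttvec{r}_{k+1}\neq 0$ for the strict inequality is a small, welcome addition that the paper leaves implicit.
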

	
	\begin{proof}
		To prove that $\ttmat{P}_{k+1}\bm{\gamma}_{k+1}$ is a descent direction,
		we must show that
		\begin{equation} \label{proof:Pk1:eq:innprod}
			\langle \nabla{\Phi}(\ttvec{x}_{k+1}), \ttmat{P}_{k+1}\bm{\gamma}_{k+1}\rangle < 0 ,
		\end{equation}
		with $\Phi$ defined in \eqref{eq:Phi:X}, and $\ttmat{P}_{k+1}\bm{\gamma}_{k+1}$ computed as in~\eqref{eq:Ax}. As in Proposition~\ref{prop:alpha}, we consider the vectorized form $\hat{\Phi}$
		, setting $\vec{\gamma}_{k+1} = \text{vec}(\bm{\gamma}_{k+1})$ and $\vec{\beta}_{k} = \text{vec}(\bm{\beta}_{k})$, both belonging to $\R^{s_k}$.
		Following~\cite[Equations (69), (81)]{Petersen2008}, we compute the terms of the Jacobian of $\hat{\Phi}$ with respect to $\vec{x}_{k+1}$, yielding
		$$\frac{\partial (\vec{x}_{k+1}^\T\vec{c})}{\partial\vec{x}_{k+1}} =
		{\vec{c}}, \qquad
		\frac{\partial 
			\bigl(\vec{x}_{k+1}^\T\mat{A}\vec{x}_{k+1}\bigr)}{\partial\vec{x}_{k+1}} =
		2\mat{A}\vec{x}_{k+1}.
		$$
		Here we used the fact that $\ttmat{A}$ is symmetric.
		Hence,
		$\nabla\hat{\Phi}(\vec{x}_{k+1}) = \mat{A}\vec{x}_{k+1}
		- \ttvec{c} = -\vec{r}_{k+1}$.
The inner product of \eqref{proof:Pk1:eq:innprod} can be written in vectorized form,
and 
\begin{equation*}
\begin{split}
\langle \nabla\hat{\Phi}(\vec{x}_{k+1}), \mat{P}_{k+1}\vec{\gamma}_{k+1}\rangle &
= \langle -\vec{r}_{k+1},  \vec{r}_{k+1} +
\mat{P}_{k}\vec{\beta}_k\rangle\\
& = - \|\vec{r}_{k+1}\|_F^2 -\langle \vec{r}_{k+1},
\mat{P}_{k}\vec{\beta}_k\rangle	\\ 
&=  - \|\ttvec{r}_{k+1}\|_F^2 -\langle \ttvec{r}_{k+1},
\ttmat{P}_{k}\bm{\beta}_k\rangle = - \|\ttvec{r}_{k+1}\|_F^2 < 0 ,
\end{split}
\end{equation*}
where, by using \eqref{eq:Rk1orthPk}, we have that
		$\langle \ttvec{r}_{k+1}, \ttmat{P}_{k}\bm{\beta}_k\rangle =  0$ for $ \ttmat{P}_{k}\bm{\beta}_k$ computed as in~\eqref{eq:Ax}.
	\end{proof}
	
	

The tensor operator $\ttmat{P}_{k+1}$ has to be retrieved 
from the tensor $\ttvec{g}_{k+1} = \ttvec{r}_{k+1} + \ttmat{P}_k\bm{\beta}_{k}$ belonging to $\R^{\vec{n}}$. 
{For this purpose, we use truncation strategies associated with operations in 
Tucker format, which is further detailed in Section~\ref{sec:tucker}.}
Here we only describe how the procedure allows us to construct the next iterate  $\ttmat{P}_{k+1}$.
Let $\mat{U}_{k+1, h}$ be the $h$th factor matrix of size $(n_{h}\times s_{k+1,h})$ and $\bm{\gamma}_{k+1}$ the core tensor from the Tucker decomposition of $\ttvec{g}_{k+1}$ at multilinear rank $\vec{s}_{k+1}$. We define $\ttmat{P}_{k+1} = \mat{U}_{k+1, 1}\otimes \cdots \otimes \mat{U}_{k+1, d}$, where $\otimes$ denotes the Kronecker product, obtaining
\[
\ttvec{g}_{k+1} = \ttvec{r}_{k+1} + \ttmat{P}_k\bm{\beta}_{k} = \ttmat{P}_{k+1}\bm{\gamma}_{k+1}.
\]
By construction, $\ttmat{P}_{k+1}$ represents a multilinear operator from $\R^{\vec{s}_{k+1}}$ to $\R^{\vec{n}}$.

We note that the initial tensor $\ttmat{P}_0$ can be obtained in the same way from the Tucker decomposition of the
initial residual, written in Tucker format.
Similarly to the remark in~\cite{Palitta2025SsCG}, the core tensor $\bm{\gamma}_{k+1}$ does not play any role in the entire algorithm. As a consequence, its computation can be avoided.
Moreover, 
$\ttmat{P}_{k+1}$ can be obtained from a Tucker approximation of $\ttvec{g}_{k+1}$ rather than 
via a decomposition, prescribing a maximal value for the multilinear rank. 
The use of a small multilinear rank threshold
is crucial to maintain memory and computational costs under control.
In particular, the computation of $\bm{\alpha}_k$ and $\bm{\beta}_k$ involves solving a reduced tensor
equation of dimensions $\vec{s}_k$, which can also cause high costs for a large multilinear rank.

Given a tensor operator $\ttmat{A}$, a right-hand side $\ttvec{c}$, and an initial guess $\ttvec{x}_0$, a convergence tolerance $\texttt{tol}$ and a maximum number of iterations \texttt{maxit}, the 
steps of the Tensor conjugate gradient method are as follows.
Later on we shall refer to this method as \tucksscg.

\begin{enumerate}
\item Compute the residual $\ttvec{r}_0=\ttvec{c}-\ttmat{A}\ttvec{x}_0$ 
\item Form the directions $\ttmat{P}_0$ 
\item For $k = 1, \dots, \texttt{maxit}$
\begin{enumerate}
\item Solve \eqref{eq:prop:alpha:thesis} to find $\bm{\alpha}_k$
\item Update the iterative solution, that is $\ttvec{x}_{k+1}=\ttvec{x}_k+\ttmat{P}_k\bm{\alpha}_k$
\item Update the residual, that is 
          $\ttvec{r}_{k+1}=\ttvec{c}-\ttmat{A}\ttvec{x}_{k+1}$ 
\item If $\|\ttvec{r}_{k+1}\|\le \texttt{tol}\|\ttvec{c}\|$, then stop
\item Solve \eqref{eq:beta:thesis} to find $\bm{\beta}_k$
\item Form the tensor $\ttvec{g}_{k+1}=\ttvec{r}_{k+1}+\ttmat{P}_k\bm{\beta}_k$, and determine
$\ttmat{P}_{k+1}$
\end{enumerate}
\end{enumerate}

\subsection{Steepest descent method}
The computation of the inner coefficient tensors $\bm{\alpha}_k$ and $\bm{\beta}_{k}$ 
requires the solution of two multilinear equations of the form as the original problem, but
with smaller dimensions. 
As an alternative, we can impose the next direction tensor to be equal to the current residual
tensor, that is
$$
\ttvec{g}_{k+1}=\ttvec{r}_{k+1}
$$
from which the next $\ttmat{P}_{k+1}$ can be directly derived.
This completely avoids the computation of one of the two coefficients, namely $\bm{\beta}_{k}$. 
Note that such a choice corresponds to the classical steepest descent algorithm in the vector case.
This algorithm is expected to be asymptotically slower than the conjugate gradient method 
(\cite[section 5.3]{Saad2003}), however, if the number of extra iterations is not very significant, the
reduced cost can largely overcome the lower speed. The reported numerical experiments confirm this
expectation.
We shall refer to this method as \tucksssd.



\section{Implementation}\label{sec:Impl}
The sound implementation of both algorithms requires the description of several technical details.
Every operations with tensors such as sums and products, increase the tensor rank. As previously mentioned,
a maximum tensor rank needs to be fixed, to avoid wild memory consumptions, leading to rank truncation 
strategies. Moreover, the computation of the recurrence coefficients requires solving (small) tensor equations,
which all need be addressed. In the following we provide these details, together with the complete
algorithms.

\subsection{The Tucker format and tensor operations}\label{sec:tucker}
The Tucker decomposition \cite{Tucker1966, BallardKolda2025}
factorizes $\ttvec{x}$ as the product of a smaller order-$d$ tensor, $\ttvec{c}_{\ttvec{x}}\in\R^{\bf r}$  called \emph{core tensor}, times $d$ factor matrices, $\mat{U}_k\in\R^{n_k\times r_k}$, that is
\[
\ttvec{x} = \ttvec{c}_{\ttvec{x}}\times_1 \mat{U}_1 \times_2 \mat{U}_2 \cdots \times_d \mat{U}_d, 
\]
with $\mat{U}_j$ having orthonormal columns.
The index vector $\vec{r} = (r_1, \dots, r_d)$ is called \emph{multilinear rank}. The Tucker factorization can be equivalently expressed as 
\[
\ttvec{x} = \mat{U}_1\otimes \cdots \otimes \mat{U}_d\,\ttvec{c}_{\ttvec{x}},
\]
see~\cite[Proposition 3.22]{BallardKolda2025} for further details. Several algorithms exist for computing the Tucker decomposition, among which the most widely used are High-Order SVD (HOSVD)~\cite{Lathauwer2000HOSVD}, the Sequentially Truncated HOSVD (ST-HOSVD)~\cite{Vannieuwenhoven2012STHOSVD}, and the High-Order Orthogonal Iteration~\cite{Kroonenberg1980,Kapteyn1986,Lathauwer2000lmlra}. 
Both tensor-oriented algorithms are designed with all tensors in Tucker format. 
However, sums and products among tensors tend to be computationally expensive and memory consuming.
Thus, we employ two distinct operations to handle the sum of tensors and the 
product between a tensor operator and a tensor in a cheaper way by truncation.

\paragraph*{Sum of Tucker-tensors}
Consider the sum of two tensors $\ttvec{x}$ and $\ttvec{y}$,
both expressed in Tucker-format,  that is
\[
\ttvec{x} = \mat{F}_1\otimes \cdots \otimes \mat{F}_d\,\ttvec{c}_{\ttvec{x}}\qquad
\text{and}\qquad \ttvec{y} = \mat{G}_1\otimes \cdots \otimes \mat{G}_d\,\ttvec{c}_{\ttvec{y}},
\]
where $\ttvec{c}_{\ttvec{x}}\in\R^{\vec{r}}$ and $\ttvec{c}_{\ttvec{y}}\in\R^{\vec{p}}$. To 
construct the Tucker factorization of the sum $\ttvec{z} = \ttvec{x} + \ttvec{y}$, 
i.e., $\ttvec{z} = \mat{L}_1\otimes \cdots\otimes \mat{L}_d\,\ttvec{c}_{\ttvec{z}}$ 
we proceed as follows. 
The tensor core $\ttvec{c}_\ttvec{z}$ is an $\vec{r}+\vec{p}$ block-diagonal tensor 
computed by diagonally concatenating the core tensors of $\ttvec{x}$ and $\ttvec{y}$, 
that is ${\ttvec{c}}_{\ttvec{z}} = 
\texttt{tenblkdiag}(\ttvec{c}_\ttvec{x}, \ttvec{c}_\ttvec{y})$. 
The factor matrices of $\ttvec{z}$ are obtained by matrix superposition, that is  
${\mat{L}}_j  = [\mat{F}_j , \mat{G}_j ]\in\R^{n_j \times (r_j  + p_j )}$ for $j  = 1, \dots, d$. 
The storage costs of these quantities grows exponentially with $d$, hence
a size reduction becomes mandatory.
We then approximate $\mat{L}_j $ with a truncated-SVD 
by discarding the singular values which are less than a threshold, and the corresponding left {and right} singular
vectors. {We thus store the retained left singular matrix 
$\mat{U}_j\in\R^{n_j \times t_j}$}, 
and we multiply $\ttvec{c}_{\ttvec{z}}$ along each mode
by the corresponding singular value matrix and transposed right singular matrix.
These steps are repeated for every $j =1,\dots, d$. 
The so-obtained core tensor ${\ttvec{c}}_{\ttvec{z}}$ is truncated by ST-HOSVD
at multilinear-rank equal to $\texttt{maxrank}$ in all its components,
 giving ${\ttvec{c}}_{\ttvec{z}} = \mat{M}_1\otimes \cdots \otimes \mat{M}_d\, {\ttvec{c}}'_{\ttvec{z}}$, 
where ${\ttvec{c}}'_{\ttvec{z}}\in\R^{\vec{q}}$, and 
$\mat{M}_j \in\R^{t_j  \times q_j}$ with $q_j  \le \texttt{maxrank}$ 
for $j  = 1, \dots, d$. The final Tucker-approximation of $\ttvec{z} = \ttvec{x} + \ttvec{y}$ is
\[
{\mat{L}}'_1\otimes\cdots\otimes{\mat{L}}'_d\, {\ttvec{c}'}_{\ttvec{z}}
\]
where ${\mat{L}}'_j  = {\mat{U}}_j \mat{M}_j $ for $j =1, \dots, d$. 
This rounding strategy is summarized in the appendix as
 Algorithm~\ref{alg:tucker-round}, and it will referred to as
\texttt{Tucker-rounding-sum}; computational and memory costs are also discussed there.

\paragraph*{Product of tensor operator times Tucker-tensor}
A very similar rounding strategy has to be applied to compute the product between a tensor operator, $\ttmat{A}$, and a tensor in Tucker format, $\ttvec{x} = \mat{F}_1\otimes \cdots \otimes \mat{F}_d\,\ttvec{c}_{\ttvec{x}}$. Recalling that a tensor operator $\ttmat{A}:\R^{\vec{n}}\rightarrow\R^{\vec{m}}$ can be expressed as 
\begin{equation}
	\label{eq:tenop}
	\ttmat{A}
	= \sum_{h=1}^{\ell} \mat{A}_{1,h} \otimes \cdots\otimes \mat{A}_{d,h}
\end{equation}
where $\mat{A}_{j,h}$ is an $(m_j\times n_j)$ matrix, $\ell\in\N$ is not minimal.
The product $\ttmat{A}\ttvec{x}$ results in
\begin{equation}
\begin{split}
\ttvec{y} = \ttmat{A}\ttvec{x} &
= \Bigl(\sum_{h=1}^{\ell} \mat{A}_{1,h} \otimes \cdots\otimes 
\mat{A}_{d,h}\Bigr)\mat{F}_1\otimes \cdots \otimes \mat{F}_d\,\ttvec{c}_{\ttvec{x}}. 
\end{split}
\end{equation}
This tensor operator times Tucker-tensor product reduces to the sum of $\ell$ tensors in Tucker format. Thus, an algorithm similar to the \texttt{Tucker-rounding-sum} can be designed to efficiently compute the 
Tucker (approximate) factorization of $\ttvec{y}$ in the form
 $\mat{G}_1\otimes \cdots \otimes \mat{G}_d\,\ttvec{c}_{\ttvec{y}}$. First we initialize the core tensor $\ttvec{c}_{\ttvec{y}}$ equal to $\ttvec{c}_{\ttvec{x}}$, and we define the matrices  $\mat{G}_{j, 1} = \mat{A}_{j,1}\mat{F}_j$ for $j=1,\dots, d$. Then, for $h=2$, we update $\ttvec{c}_{\ttvec{y}}$ by diagonally concatenating it with $\ttvec{c}_{\ttvec{x}}$, and we 
form the matrix $\mat{G}_{j,h} = [\mat{G}_{j-1, h}, \mat{A}_{j,h}\mat{F}_j]$ for $j=1,\dots, d$. 
{As done with $\mat{L}_j$ in the previous paragraph, we approximate $\mat{G}_{j,h}$ by a truncated-SVD
with thresholding.
We store the retained left singular matrix $\mat{U}_j\in\R^{n_j \times t_j}$, 
and we multiply $\ttvec{c}_{\ttvec{y}}$ along mode $j$ by the corresponding 
singular value matrix and transposed right singular matrix.} 
We repeat these steps for every $j=1,\dots, d$, and then we increment $h$, until $j=\ell$. Once we have iterated over all modes and all terms of $\ttmat{A}$, the core tensor ${\ttvec{c}}_{\ttvec{y}}$ is approximated at multilinear-rank equal to $\texttt{maxrank}$ in all its component by ST-HOSVD, obtaining ${\ttvec{c}}_{\ttvec{y}} = \mat{L}_1\otimes \cdots \otimes \mat{L}_d\, {\ttvec{c}'}_{\ttvec{y}}$, where ${\ttvec{c}'}_{\ttvec{y}}\in\R^{\vec{q}}$, and $\mat{L}_j \in\R^{t_j  \times q_j}$ with $q_j  \le \texttt{maxrank}$ for $j  = 1, \dots, d$. 
The final Tucker-approximation of $\ttvec{y} = \ttmat{A}\ttvec{x}$ is
${\ttvec{y}}= {\mat{G}}'_1\otimes\cdots\otimes{\mat{G}}'_d\, {\ttvec{c}'}_{\ttvec{y}}$,
where ${\mat{G}}'_j  = {\mat{U}}_j \mat{L}_j $ for $j =1, \dots, d$. 
This rounding strategy is summarized in the appendix as 
Algorithm~\ref{alg:tucker-roundAx} and will be referred to as, 
\texttt{Tucker-rounding-product}; costs are also described there.

\subsection{Computation of the tensor coefficients}
To compute the coefficients $\bm{\alpha}_k\in\R^{\vec{s}_k}$ (and 
$\bm{\beta}_k\in\R^{\vec{s}_k}$ in the conjugate gradient case), 
we need to solve a multilinear system of the same order as the original, 
but lower dimension, having $s_{k, h} \le n_h$ for $h=1, \dots, d$. 
The problem size grows exponentially with $d$ as $\prod_{h=1}^d s_{h,k}$, hence
already for modest values of both  $s_{h,k}$ and $d$, efficient solvers need be considered.
In our implementation we used the \sscg algorithm in~\cite{Palitta2025SsCG}
on the problem matricization along mode $1$ of 
\eqref{eq:prop:alpha:thesis} and \eqref{eq:beta:thesis} (the latter only for the conjugate gradients).
More precisely, at iteration $k$ we solve
\begin{equation} \label{eq:alpha:sscg}
\sum_{h=1}^{\ell}\widetilde{\mat{A}}_{1,h}\bm{\alpha}^{(1)}\bigl(\widetilde{\mat{A}}_{d,h}\otimes \cdots \otimes \widetilde{\mat{A}}_{2,h}\bigr)^{\top}  
= \widetilde{\mat{S}}_{1, k} \mat{C}_{\ttvec{r}_k}^{(1)}\bigl(\widetilde{\mat{S}}_{d, k}\otimes \cdots \otimes \widetilde{\mat{S}}_{2, k}\bigr)^{\top} 
\end{equation}
where 
$\widetilde{\mat{A}}_{j,h} = \mat{U}_j^{\top}\mat{A}_{j,h}\mat{U}_j$ and
$\widetilde{\mat{S}}_{j,k} = \mat{U}_j^{\top}\mat{S}_{j, k}\mat{U}_j$,
where $\ttvec{r}_{k} = \mat{S}_{1,k}\otimes\cdots\otimes\mat{S}_{d,k}\,\ttvec{c}_{\ttvec{r}_{k}}$.
The right matrices of the operator in these equations are obtained as the Kronecker product between 
$d-1$ matrices;
as $d=3$ in all our experiments and their size is limited, we explicitly store the resulting
matrices.
Similarly for $\bm{\beta}$, with the same coefficient tensor and different right-hand side.


\paragraph*{Mixed-precision computation}
The rounding of tensor sums and contractions is the operation that mostly affects the 
accuracy of the final iterative solution. Thus, some other operations can be performed 
with a less accurate machine precision to further reduce the computational costs. 
In particular, the computation of $\bm{\alpha}$ (and $\bm{\beta}$ in \tucksscg) are performed
in {\it single machine precision}, and so is the application of the 
\peig\ and \pfft\ preconditioners.
The resulting tensor is cast back in double precision.

\section{The complete preconditioned gradient descent algorithms}\label{sec:final}
The complete \tucksssd\ and
 \tucksscg\ strategies are outlined in Algorithm~\ref{alg:tucksscg}.

\begin{algorithm}[h]
{\begin{algorithmic}[1]
\Statex \textbf{Input:} Tensor operator $\ttmat{A}$, right-hand side $\ttvec{c}$, 
an initial guess $\ttvec{x}_0$ in Tucker format, maximum multilinear rank $\vec{r}$, 
truncation tolerance $\delta$, maximum number of iterations $\texttt{maxit}$, 
tolerance $\texttt{tol}$, preconditioner $\ttmat{M}$, method (1/2): 
Steepest Descent / Conjugate Gradients.
\Statex \textbf{Output:} Approximate solution $\ttvec{x}_k$  such that 
$\|\ttmat{A}\ttvec{x}_k-\ttvec{c}\|\leq \|\ttvec{c}\| \cdot \texttt{tol}$
\smallskip
\State Set $\ttvec{y} = \texttt{Tucker-rounding-product}(\ttmat{A}, \ttvec{x}_0, \vec{r}, \delta)$ and

 $\ttvec{r}_0=\texttt{Tucker-rounding-sum}\bigr(\ttvec{c}, \ttvec{y}, \vec{r}, \delta\bigl)$
			\State Set $[\sim, \{\mat{U}_h\}_h] = \texttt{ST-HOSVD}(\ttvec{r}_0, \vec{r})$ and $\ttmat{P}_0 = \mat{U}_1\otimes\cdots\otimes\mat{U}_d$\label{alg:tucksscg:P0}
	\For{$k=0,\ldots,\mathtt{maxit}$}
	\State Compute $\bm{\alpha}_k$ by solving \eqref{eq:alpha:sscg} with  \sscg 
			\Comment{single precision} 
			\State Set $\ttvec{x}_{k+1}=\texttt{Tucker-rounding-sum}\bigr(\ttvec{x}_k, \ttvec{w}, \vec{r}, \delta\bigl)$  where $\ttvec{w} = \ttmat{P}_k\bm{\alpha}_k$ is already in Tucker format
\State Compute $\ttvec{y} = \texttt{Tucker-rounding-product}(\ttmat{A}, \ttvec{x}_{k+1}, \vec{r}, \delta)$
\State Set $\ttvec{r}_{k+1}=\texttt{Tucker-rounding-sum}\bigr(\ttvec{c}, -\ttvec{y}, \vec{r}, \delta\bigl)$  
			\State {{\bf if} $\|\ttvec{r}_{k+1}\|\leq \|\ttvec{c}\| \cdot \texttt{tol}$} then {\tt stop}
			\State {\bf if} method=1, set $\ttvec{g}_{k+1} = \ttvec{r}_{k+1}$
\State {\bf else, if} method=2, Compute $\bm{\beta}_k$ by solving \eqref{eq:beta:thesis} with \sscg
			{\Comment{single precision}} and set \label{alg:tucksscg:beta}
			 $$
\ttvec{g}_{k+1} = \texttt{Tucker-rounding-sum}\bigr(\ttvec{r}_{k+1}, \ttvec{v}, \vec{r}, \delta\bigl)
$$ 
where $\ttvec{v} = \ttmat{P}_k\bm{\beta}_k$ is already in Tucker format\label{alg:tucksscg:gk}
			\State Set $[\sim, \{\mat{U}_h\}_h] = \texttt{ST-HOSVD}(\ttvec{g}_{k+1}, \vec{r})$ and $\ttmat{P}_{k+1} = \mat{U}_1\otimes\cdots\otimes\mat{U}_d$\label{alg:tucksscg:Pk}
			\EndFor
			\State Return $\ttvec{x}_{k+1}$
		\end{algorithmic}    \caption{\tucksssd, \tucksscg}\label{alg:tucksscg}
	}
\end{algorithm}


In the following we describe preconditioning strategies aimed at
accelerating the proposed algorithms.
Preconditioning is routinely used in vector 
gradient descent methods such as CG, and it usually consists in
the construction of a matrix or operator  to be applied to the original problem.
In multiterm equations building an effective preconditioner is particularly complex, because
of the number of operators to take care of. The presence of tensor addends just exacerbates the problem.
Modern literature has yet to establish an effective and reliable preconditioner for 
generic tensor operators, hence the topic is still exploratory.
For the sake of generality, we thus consider as preconditioner $\ttmat{M}$ 
a tensor operator from $\R^{\vec{n}}$ to  $\R^{\vec{n}}$. 
As it is usually done in the
vector case in~\cite{Golub2013MatComp}, and for the matrix case in~\cite{Palitta2025SsCG}, 
the operator can be applied directly to the residual tensor, 
that is, to the tensor computed in Line 7 of Algorithm~\ref{alg:tucksscg} 
If now we denote ${\bm{z}}_k = \ttmat{M}^{-1}\ttvec{r}_k$, the gradient descent method
defines $\ttmat{P}_{k+1}={\bm{z}}_k$, while the conjugate gradient method uses the
following the updating formula 
\[
\ttmat{P}_{k+1}{\bm{\gamma}}_{k+1} =  {\bm{z}}_k + \ttmat{P}_k\bm{\beta}_k.
\]
This way, in both algorithms
the generated direction tensors will actually be the preconditioned direction tensors. We recall here
that all these operations are performed under the truncated tensorized framework. In particular,
in Line~\ref{alg:tucksscg:P0} of  Algorithm~\ref{alg:tucksscg}, $\ttvec{r}_0$ is replaced 
by $\ttvec{z}_0 = \texttt{Tucker-rounding-product}(\ttmat{M}, \ttvec{r}_0, \vec{r}, \delta)$. 
Similarly, $\ttvec{g}_{k+1}$ in Line~\ref{alg:tucksscg:Pk} is replaced by 
$\ttvec{z}_{k+1} = \texttt{Tucker-rounding-product}(\ttmat{M}, \ttvec{g}_{k+1}, \vec{r}, \delta)$.

\subsection{Matrix preconditioning}\label{sec:prec}
To simplify the exposition, in this section we will assume to work with order-$3$ tensors. Whenever the problem tensor stems from the discretization of a multidimensional elliptic PDE,
a possibly successful preconditioner investigated in the literature
consists of explicitly approximating the inverse of the Laplace operator \cite{Hackbusch2006}.
More precisely, let $\bm{\Delta}_1 = (n+1)^2{\rm tridiag}(-1, \underline{2}, -1)$ be the matrix discretization of the 1-dimensional Laplace operator on a limited closed interval over a grid of $n$.
%
In~\cite{Hackbusch2006}, 
the authors show that an approximate inverse of the discrete $3$-dimensional 
(tensor) Laplace operator can be written as
\begin{equation}
	\label{eq:invLap}
	\ttmat{M} = 
	\sum_{h= -q}^{q}c_h\,\exp(t_h\bm{\Delta}_1)\otimes \exp(t_h\bm{\Delta}_1)\otimes\exp(t_h\bm{\Delta}_1) , 
\end{equation}
where $c_h = \eta t_h$, $t_h = \exp(h\eta)$, $\eta = \pi/\sqrt{q}$. 
We consider two possible strategies to apply the operator $\ttmat{M}$ as preconditioner.

\paragraph{Eigenvalue factorization} The first strategy explicitly forms the exponential matrices 
appearing in Equation~\eqref{eq:invLap} in a generic manner.
Recalling that $\bm{\Delta}_1$ is a symmetric, we let $\bm{\Delta}_1 = \mat{V}\bm{\Lambda}\mat{V}^{\top}$ 
be its spectral decomposition, so that
	$\exp(t_h\bm{\Delta}_1) = \mat{V}\exp(t_h\bm{\Lambda})\mat{V}^{\top}$.
The operator $\ttmat{M}$ can
be applied to a given tensor, by a sequence of tensor-tensor operations.
As the exponential matrices are equal in all modes, we have to form only $2q+1$ exponential matrices, which
is costly also for moderate sizes of $\bm{\Delta}_1$. Hence, $q$ should be kept low. In our experiments, we
used $q=1$.
The action of the operator over a tensor in Tucker format can be computed using the 
\texttt{tucker-rounding-product} routine, cf. Algorithm~\ref{alg:tucker-roundAx}.
We observe that this procedure does not exploit the structure of $\bm{\Delta}_1$, except for its symmetry,
thefore it could be applied for other symmetric matrices.
%
In the numerical experiments section below we will refer to this preconditioning strategy as
\peig.

\paragraph{Fast Fourier Transforms} The second strategy corresponds to an operator-aware version
of the previous strategy. Indeed, the full eigendecomposition of $\bm{\Delta}_1$ is analytically
available, and the exponential enjoys particularly strong structural properties when applied to Kronecker sums.
Hence, the whole procedure significantly simplifies. Let $\mat{V}\bm{\Lambda}\mat{V}^{-1}$ be the eigenvalue decomposition of $\bm{\Delta}_1$. Then, setting $\ttmat{V}_h = \mat{V}_h\otimes\mat{V}_h\otimes\mat{V}_h$, 
it holds that (\cite{Dolgov2013TTGMRES, Higham2008})
%
\begin{equation*}
	\exp(t_h\bm{\Delta}_1)\otimes \exp(t_h\bm{\Delta}_1)\otimes\exp(t_h\bm{\Delta}_1)= \ttmat{V}_h\bigl(\exp(t_h\bm{\Lambda})\otimes\exp(t_h\bm{\Lambda})\otimes\exp(t_h\bm{\Lambda})\bigr)\ttmat{V}_h^{-1}
\end{equation*}  
from which, using
$\bm{\Lambda} \oplus \bm{\Lambda}\oplus \bm{\Lambda} := \bm{\Lambda}\otimes\I \otimes\I +\I \otimes\bm{\Lambda}\otimes\I +\I\otimes\I \otimes\bm{\Lambda}$,
\begin{equation*}
	\exp(t_h\bm{\Delta}_1)\otimes \exp(t_h\bm{\Delta}_1)\otimes\exp(t_h\bm{\Delta}_1) = \ttmat{V}_h\exp\bigl(t_h(\bm{\Lambda} \oplus \bm{\Lambda}\oplus\bm{\Lambda})\bigr)\ttmat{V}_h^{-1}.
\end{equation*}
Thanks to this construction, the operator defined in~\eqref{eq:invLap} can be expressed as
\begin{equation*}
	\ttmat{M} = \sum_{h=-q}^{q}c_h \ttmat{V}_h\exp\bigl(t_h(\bm{\Lambda} \oplus \bm{\Lambda}\oplus\bm{\Lambda})\bigr)\ttmat{V}_h^{-1}
\end{equation*}
Let $\ttvec{x} = \mat{U}_1\otimes\mat{U}_2\otimes\mat{U}_3\,\ttvec{c}_{\ttvec{x}}$, then by substitution,
$\ttmat{V}_h^{-1}\ttvec{x} = 
\mat{V_h^{-1}}\mat{U}_1\otimes\mat{V_h^{-1}}\mat{U}_2\otimes\mat{V_h^{-1}}\mat{U}_3\,\ttvec{c}_{\ttvec{x}}$.
Recalling that the $\mat{V}_h$s are the eigenvectors of the 1-D Laplacian,
the matrices $\widehat{\mat{U}}_{j} = \mat{V}_h^{-1}\mat{U}_j$ correspond to the image 
of $\mat{U}_j$ by the Discrete Sine Transform of Type I (DST-I). 
Similarly, the product between $\ttmat{V}_h$ and a tensor in Tucker format can be computed using 
the inverse of DST-I (iDST-I). 
We refer to the appendix for more details on our implementation in the tensor setting.
We expect this preconditioner to be very fast in terms of application, and very effective in
reducing the number of iterations whenever the problem operator is close to the d-D Laplacian.

In our numerical experiments we shall use the term {\pfft } to denote this implementation
of the preconditioner.

\subsection{Inner-Outer preconditioning}
A classical and easy way to construct a preconditioning operator consists of
approximating the action of $\ttmat{A}^{-1}$ to a tensor by calling a few iterations,
 usually a fixed number of them, of some iterative procedure. This strategy takes
the name of inner-outer procedure: at each iteration of an outer iterative methods, 
another - or the same - iterative method is called for a few iterations.
We refer to \cite{Saad1993, Golub99} for examples in the nonsymmetric and symmetric
matrix cases, respectively, and to the 
bibliography of \cite{Simoncini2003d} for a recollection of more references.
With this strategy, instead of applying the preconditioner, 
we call a few iterations of either of the two new methods to solve the multilinear system
$\ttmat{A}{\bm z}_k = \ttvec{r}_k$.
Stopping the inner solver after a limited number of steps is key to 
ensure that the overall procedure remains computationally competitive.
This strategy is useful for those applications where an explicit preconditioner is 
unavailable or too expensive to construct.

In the experiments in Section \ref{sec:experiments}, 
we choose {\tucksssd } as inner solver because is cheaper than \tucksscg. 
We set the stopping tolerance to $10^{-1}$ and the maximum number of iterations to $4$.
This strategy is referred to as \pio.
\section{Numerical experiments}
\label{sec:experiments}
 This section is devoted to the illustration of the performance
of the proposed class of methods. 
The experiments are performed in MATLAB 2025b on a machine with an 
Intel Core i7-12700H CPU @ 2.30GHz and 16 GB of RAM. 

We consider 
the \textit{Alternating Minimal Energy} algorithm 
(AMEn)\footnote{Code available at \\
\href{https://mathworks.com/matlabcentral/fileexchange/46312-oseledets-tt-toolbox}{\texttt{https://mathworks.com/matlabcentral/fileexchange/46312-oseledets-tt-toolbox}}}~\cite{Dolgov2014AMEn} for comparison purposes. 
In AMEn data and working quantities are stored in Tensor-Train (TT) format, a
popular and effective way of representing high order tensors, as
 it requires $\mathcal{O}(dnr^2)$ memory allocations  \cite{Oseledets2011}.
For large $d$, this compares favourably with the Tucker representation, which
uses $\mathcal{O}(dnr+r^d)$ storage.
AMEn alternates between two main phases: i) a local optimization phase, in which one 
block of the TT-vector (a TT-core) is updated at a time by solving a 
small projected linear system; ii) an expansion phase, where information 
from the global residual is used to enrich the local search space before 
proceeding to the next TT-core. {The phases round-robin through
	all cores; such a cycle is called \textit{sweep}}.
We should mention that in our experiments we will consider the discretization of
three-dimensional PDEs, yielding a moderate ($d=3$) order tensor, making
the Tucker format more appealing.
Nonetheless, an implementation of our algorithms can be naturally carried out
in the TT-format in case problems with high order tensors occur, without any further
changes in the algorithms.


All experiments below deal with algebraic problems stemming from the discretization of
separable elliptic operators. More precisely,
consider the discretization by finite differences
of three-dimensional
diffusion problems with separable coefficients, defined in a box with
zero boundary conditions. As usual, the one-dimensional Laplace operator is given
by
$$
\mat{\Delta}_1 := \frac 1 {h^2} {\rm tridiag}(-1, \underline{2}, -1)
\in \mathbb{R}^{(n+1)\times(n+1)},
$$ 
where $h$ is the grid step size and $n+1$
is the number of stencil points. The diffusion 1D operator with non-constant
coefficients can be discretized as
$$
(a(x)u_x)_x \approx {\bm R} {\bm D} {\bm R}^T, \qquad 
{\bm R} := \begin{bmatrix}
    1 & -1 & & & \\
     & 1 & -1 & & \\
      &&\ddots&\ddots& \\
      &&&1&-1\\
\end{bmatrix}\in\mathbb{R}^{(n+1)\times(n+2)},
$$
and ${\bm D}={\rm diag}(a_{\frac 1 2}, a_{\frac 3 2}, \ldots, a_{n+\frac 3 2})$, with 
$a_j=a(x_j)$.
If the term does not involve a derivative, such as in $a(x)u(x,y,z)$, then the
matrix coefficient is given as
${\bm D}_x={\rm diag}(a_1, a_2, \ldots, a_{n+1})$.

A few more technical details are reported 
to describe our experimental setting.
As already mentioned,
the solution of the reduced problem in $\bm \alpha$ (and $\bm \beta$) is performed
by using \sscg\ (\cite{Palitta2025SsCG}) where two of the three modes are matricized.
The parameter settings for \sscg\ are initial guess $\texttt{x0} = 0$,
$\texttt{tol\_inner} = \min\{10^{-2}, \|\ttvec{r}_k\|/\|\ttvec{r}_0\|\}$,
maximal number of iterations $\texttt{maxit\_inner} = 100$, 
maximal rank equal to the $\texttt{maxrank}$ value prescribed for the outer solver,
maximal residual rank $d\cdot\texttt{maxrank}$.  
{In AMEn, we use the same values for the corresponding parameters, 
that is, zero as initial guess (\texttt{x0} = $0$),
 same maximal rank (\texttt{rmax} = \texttt{maxrank})
 and convergence tolerance, (\texttt{tol\_exit} = \texttt{tol}). 
 For all the numerical experiments,  we tested the three 
 preconditioning implemented options in AMEn; while they 
 are effective in reducing the number of sweeps, 
 the total CPU time is 
higher than with unpreconditioned AMEn.} 

As already mentioned, we expect 
the steepest descent method to show lower CPU time than the conjugate gradient strategy
as long as few iterations are carried out, while the opposite will occur when convergence
is slower, especially because the steepest descent method will have an asymptotically slower
convergence.
Therefore, in all experiments we report results for both methods in the unpreconditioned
case, whereas we only show results with \tucksssd\ in the preconditioned case, when 
convergence is significantly faster.

{We observed empirically that AMEn's CPU time may vary significantly, 
for example, if Matlab needs to allocate memory. 
For a fair comparison, we run all algorithms several times 
and report the average CPU time for each algorithm.}

\begin{table}[ht]
\centering
\caption{Example~\ref{subsec:ex1}. 
CPU time in seconds and number of iterations in parenthesis
for different solvers and $n=500, 1000$.  \label{tab:ex1}}
\begin{tabular}{|c|l|l|l|}
\hline
& Algorithm & $n = 500$ & $n = 1000$ \\
\hline
{\tt tol}=$10^{-3}$ &			\tucksscg        	& 0.33 (19)   	&  0.28 (19)   \\
		&			\tucksssd    &   0.24 (20)	&  0.23 (19)  \\
		&\pio\ \tucksssd 	&  0.57 (6)  &   0.60 (6)   \\
		&\pfft\ \tucksssd &  \textbf{0.06} (3)  &  \textbf{0.05} (3)   \\
		&\peig\ \tucksssd & 0.08 (2)   &   0.27 (2)      \\
		&AMEn             	& 0.23 (6*)	& 0.75 (5*)  \\
	\hline

{\tt tol}=$10^{-4}$ &			\tucksscg        	& 1.13 (48)  	&   2.16 (49) \\
		&			\tucksssd        	& 1.40 (79)  	&   3.85 (79) \\
		&\pio\ \tucksssd 	&  1.28 (15)   &   1.86 (18)  \\
		&\pfft\ \tucksssd &  \textbf{0.09} (4)  &    {\textbf{ 0.10} (5)} \\
		&\peig\ \tucksssd & 0.51 (21)  &      1.69 (31)   \\
		&AMEn             	& 0.72 (6*) 	&    	2.16 (6*)\\
			\hline
\end{tabular}\\
\begin{flushleft}
\hspace{60pt}$^*$ number of sweeps
\end{flushleft}
\end{table}

\vskip 0.1in
\begin{example} 
{\rm 
We consider the 3-dimensional Poisson equation $ -\Delta u = f$, $ u=u(x,y,z)$.
Using the definitions above for the discretized one-dimensional operators
we obtain the finite difference 
approximation of the 3-D Laplacian as
    \begin{equation}\label{subsec:ex1}
        \bm{\Delta}_3 := \mat{\Delta}_1 \otimes \I \otimes \I + 
        \I \otimes \mat{\Delta}_1 \otimes \I + \I \otimes \I \otimes \mat{\Delta}_1.
    \end{equation}
Note that the Kronecker operator has dimension $(n+1)^3\times (n+1)^3$, if $n$ nodes are used to 
discretize the second order derivative in each dimension.
    The forcing term is represented by the
tensor 
$\ttvec{f} = \vec{v}_1\otimes \vec{v}_2 
\otimes \vec{v}_3\in\mathbb{R}^{\vec{n}+\vec{1}}$,
where $\vec{v}_1$ is the normalized vector of all ones, and $\vec{v}_2, \vec{v}_3$ are both the first column of the identity matrix. 
In this experiment we set {\tt maxit}$=300$, 
{\tt maxrank}$=10$ (regardless of the grid dimension), while
the tolerance of the stopping criterion to $10^{-3}$.

Table~\ref{tab:ex1} reports CPU time and number of iterations for \tucksscg\ and \tucksssd\,
with its preconditioned variants: both the inner-outer and fast-solver 
strategies are reported, as discussed in section~\ref{sec:prec}. 
The performance for AMEn is also reported, where in parenthesis is the
number of alternating sweeps. 

The preconditioned method with the 
fast-solver shows the best performance, with only two iterations, and this is
clearly due to the fact that the fast solver approximates the same exact operator.
{Both {\pfft } and {\peig } variants are largely superior to AMEn, 
	the others are competitive.}
 We observe that the inner-outer strategy decreases the number of iterations as
hoped for, but is not particularly
effective compared with the other ad-hoc created preconditioners, even giving
higher CPU time than with the unpreconditioned method. 
We found this behavior recurrent also with the other examples, which seems
to imply that better tuning may be necessary for this approach to be competitive.
}
\end{example}

\begin{example} 
{\rm 
The second example consists of the discretization of the operator
\begin{equation}\label{sec:ex2}
    L(u) := ((x+1)(y+1)u_x)_x + ((x+1)(y+1)u_y)_y + ((x+1)(y+1)u_z)_z, 
\end{equation}
with nonconstant separable coefficients.
We define $\mat{D}_x, \mat{D}_y\in\mathbb{R}^{(n+1)\times(n+1)}$ 
the diagonal matrices whose diagonal entries are the coefficient functions evaluated on the mid grid points. The discretized operator is given by
$$
\ttmat{L} = \I \otimes \mat{D}_y \otimes \mat{R}\mat{D}_x\mat{R}^T + \I \otimes \mat{R}\mat{D}_y\mat{R}^T \otimes \mat{D}_x + \mat{R}\mat{R}^T \otimes \mat{D}_y \otimes \mat{D}_x.
$$
We set the input parameters as in Example~\ref{subsec:ex1}.

In spite of the nonconstant coefficients, the results do not significantly
change with respect to the previous example. It is worth mentioning that unpreconditioned
\tucksssd\ in this case behaves very similarly to \tucksscg\ for both dimensions.
It is also interesting to notice that Laplace preconditioning should be close to
being spectrally equivalent to the considered operator \cite{Faber.Manteuffel.Parter.90}, 
and thus we expect that the
number of iterations will not change much as the grid is refined; a more through investigation
with further grid refinements should be considered for a more definite assessment.
We experienced some problems with the single precision version of the \pfft,
which stagnated around the stopping tolerance. We thus used double precision in this
case. In fact, whenever using {\pfft } the number of iterations is so
small that using single precision does not provide any significant benefit.

\begin{table}[h]
	\centering
\caption{Example~\ref{sec:ex2}. 
CPU times in seconds and number of iterations in parenthesis
for different solvers and $n=500, 1000$.  \label{tab:ex2}}
\begin{tabular}{|c|l|l|l|}
\hline
& Algorithm & $n = 500$ & $n = 1000$ \\
\hline
{\tt tol}=$10^{-3}$ &			\tucksscg        	& 0.27 (19)   	&  0.38 (20)   \\
		&			\tucksssd    &   0.22 (20)	&  0.34 (19)  \\
		&\pio\ \tucksssd 	&  0.26 (5)  &   0.61 (6)   \\
		&\pfft\ \tucksssd &  \textbf{0.11} (4)  &  \textbf{0.16} (4)   \\
		&\peig\ \tucksssd & 0.13 (4)  &   0.37 (5)      \\
		&AMEn             	& 0.39 (6*)	& 2.08 (5*)  \\
	\hline

{\tt tol}=$10^{-4}$ &			\tucksscg        	& 1.23 (49)  	&   3.22 (49) \\
		&			\tucksssd        	& 0.94 (53)  	&   2.10 (50) \\
		&\pio\ \tucksssd 	&  1.39 (15)   &   2.25 (15)  \\
		&{\pfft\ \tucksssd} & \textbf{0.35}$^{\dagger}$ (8)  &   \textbf{0.34} (10) \\
		&\peig\ \tucksssd & 0.74 (24)  &     3.53 (29)   \\
		&AMEn             	& 2.24 (6*) 	&    7.06 (6*)\\
			\hline
		\end{tabular}\\
		\begin{flushleft}
			\hspace{60pt}$^*$ number of sweeps
			\hspace{20pt}{$^{\dagger}$ double precision}
		\end{flushleft}
	\end{table}

 The results shown in Table~\ref{tab:ex2} are consistent with those
of Example~\ref{subsec:ex1},  confirming the competitiveness of
preconditioned \tucksssd.
}
\end{example}

\begin{example}\label{ex:diff-react}
{\rm 
This example considers a diffusion-reaction operator 
\begin{equation}\label{ex:ex3}
    L(u) :=  \mu  \Delta u + 10^3 (x+1) (y+1) u,
\end{equation}
discretized in a similar manner as for the previous cases, yielding 
\[
\ttmat{A} = \I \otimes \I \otimes \mu \bm{\Delta}_1 + 
\I \otimes \mu \bm{\Delta}_1 \otimes \I + 
\mu \bm{\Delta}_1 \otimes \I \otimes \I + 
\mat{D}_z \otimes \I \otimes \mat{D}_x.
\]
The input parameters are set as in Example~\ref{subsec:ex1}. The
results are reported in Table~\ref{tab:ex3}.
Once again, when preconditioned, \tucksssd\ is more efficient than AMEn.
}
\end{example}

\begin{table}[h]
	\centering
\caption{Example~\ref{ex:diff-react}. 
CPU time in seconds and number of iterations in parenthesis
for different solvers and $n=500, 1000$. 
\label{tab:ex3}}
\begin{tabular}{|c|l|l|l|}
\hline
& Algorithm & $n = 500$ & $n = 1000$ \\
\hline
{\tt tol}=$10^{-3}$ &			\tucksscg        	& 0.18 (13)   	&  0.27 (16)   \\
		&			\tucksssd    &   0.14 (13)	&  0.23 (17)  \\
		&\pio\ \tucksssd 	&  0.21 (4)  &   0.33 (5)   \\
		&\pfft\ \tucksssd &  \textbf{0.04} (2)  &  \textbf{0.05} (2)   \\
		&\peig\ \tucksssd & 0.08 (2)  &   0.25 (2)      \\
		&AMEn             	& 0.26 (5*)	& 1.03 (5*)  \\
	\hline

{\tt tol}=$10^{-4}$ &			\tucksscg        	& 0.36 (21)  	&   0.72 (30) \\
		&			\tucksssd        	& 0.34 (16)  	&   0.81 (39) \\
		&\pio\ \tucksssd 	&  0.56 (6)   &   0.88 (9)  \\
		&\pfft\ \tucksssd & \textbf{0.06} (3)  &   \textbf{0.10} (4) \\
		&\peig\ \tucksssd & 0.14 (5)  &   0.45 (7)   \\
		&AMEn             	& 0.41 (5*) 	&    2.23 (6*)\\
			\hline
		\end{tabular}\\
		\begin{flushleft}
			\hspace{60pt}$^*$ number of sweeps
		\end{flushleft}
	\end{table}

\begin{example}\label{ex:discontinous}
{\rm 
Finally, we consider an operator with highly discontinuous coefficients, that is
$\nabla \cdot ( a(x)a(y)a(z) \nabla u)$ where 
$$
a(x) = \left \{ \begin{array}{ll}
10^{-2} & {\rm for}\, x\, \in {\cal J}=[\frac 1 4, \frac 3 4] \\
10 & {\rm for} \, x\,  \in [0,1] \setminus {\cal J}
\end{array} \right .
$$
All other parameters are set like in the previous examples. Here we also consider two
different values of the maximum rank for all iterates, that is ${\tt maxrank}=10, 15$.
The results for ${\tt maxrank}=10$
are reported in Table~\ref{tab:ex4-4}, and are consistent with the previous
findings, in spite of the more challenging setting.

In Figure~\ref{fig:conv_exp5_n1000} we also report the convergence history for all methods,
for both values of {\tt maxrank}. We observe that \tucksssd\ slows down significantly as
the number of iterations increases, making it the slowest of all for {\tt maxrank}=10.
For the larger value of {\tt maxrank} more information is retained also for \tucksssd, allowing
the method to closely follow \tucksscg.
We also observe how sensitive the eigenvalue preconditioner is to the truncation 
performed during the run of
\peig\ \tucksssd: while for the first few iterations the convergence
overlaps that of \pfft\ \tucksssd, at a later stage, for smaller residuals, almost
stagnation takes place. This occurs at different stages, depending on the maximum allowed rank,
as expected.
}
\end{example}

\begin{table}[ht]
\centering
\caption{Example~\ref{ex:discontinous}. 
CPU times in seconds and number of iterations in parenthesis
for different solvers and $n=500, 1000$, and {\tt maxrank}=10. 
\label{tab:ex4-4}}
\begin{tabular}{|c|l|l|l|}
\hline
& Algorithm & $n = 500$ & $n = 1000$ \\
\hline
{\tt tol}=$10^{-3}$ &			\tucksscg        	& 0.27 (19)   	&  0.30 (19)   \\
		&			\tucksssd    &   0.22 (19)	&  0.23 (20)  \\
		&\pio\ \tucksssd 	&  0.36 (6)  &   0.38 (6)   \\
		&\pfft\ \tucksssd &  \textbf{0.04} (2)  &  \textbf{0.06} (3)   \\
		&\peig\ \tucksssd & 0.09 (2)  &   0.26 (2)      \\
		&AMEn             	& 0.24 (5*)	& 0.77 (5*)  \\
	\hline
{\tt tol}=$10^{-4}$ &			\tucksscg        	& 1.12 (48)  	&   2.14 (49) \\
		&			\tucksssd        	& 1.52 (87)  	&   1.82 (77) \\
		&\pio\ \tucksssd 	&  1.64 (17)   &   1.78 (18)  \\
		&\pfft\ \tucksssd & \textbf{0.12} (4)  &   \textbf{0.09} (4) \\
		&\peig\ \tucksssd & 0.71 (28)  &   0.96 (23)   \\
		&AMEn             	& 0.78 (6*) 	&    2.30 (6*)\\
			\hline
		\end{tabular}\\
\begin{flushleft}
\hspace{90pt}$^*$ number of sweeps
\end{flushleft}
\end{table}

\begin{figure}[htbp]
\centering
\includegraphics[width=0.49\textwidth]{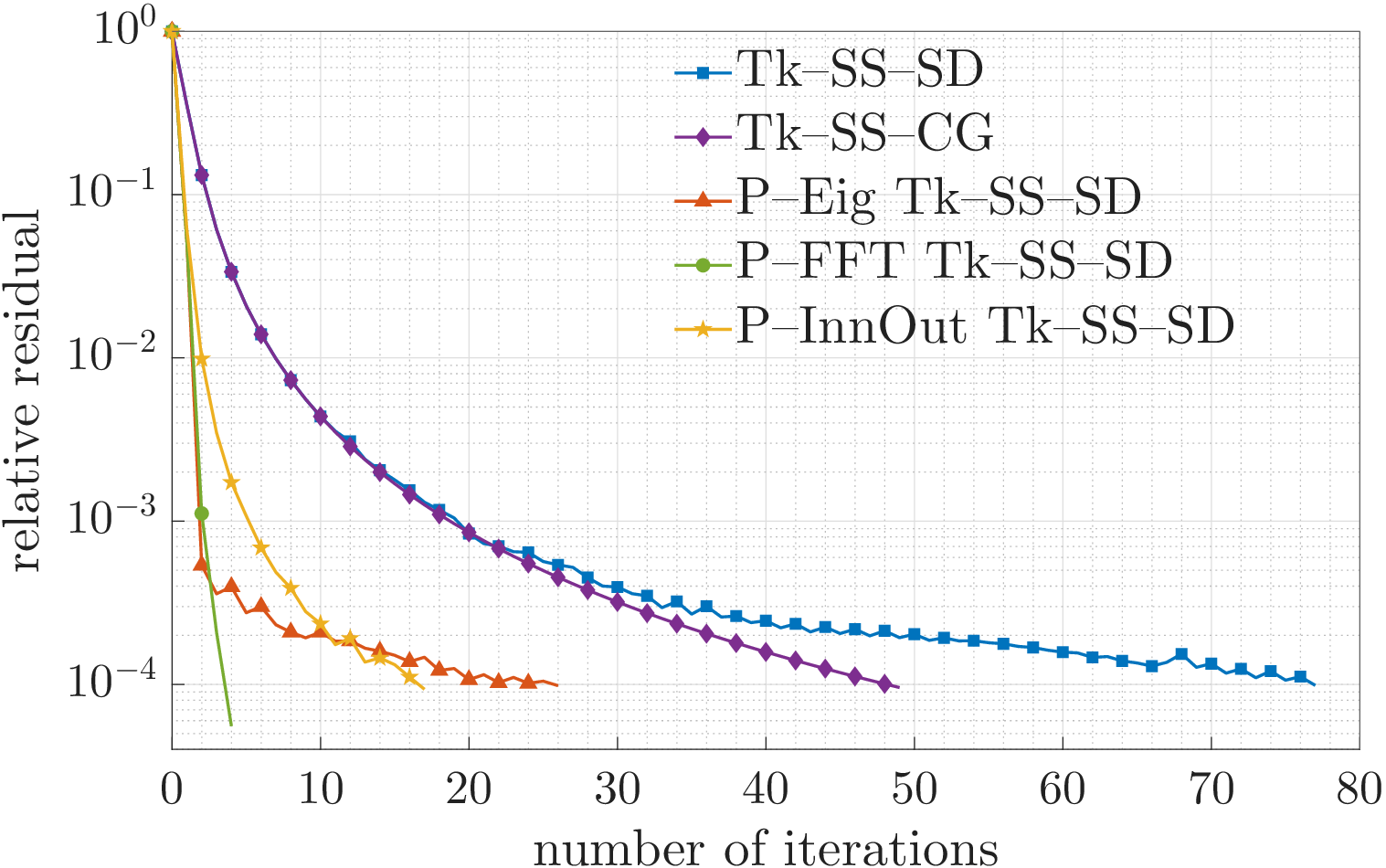} \
\includegraphics[width=0.49\textwidth]{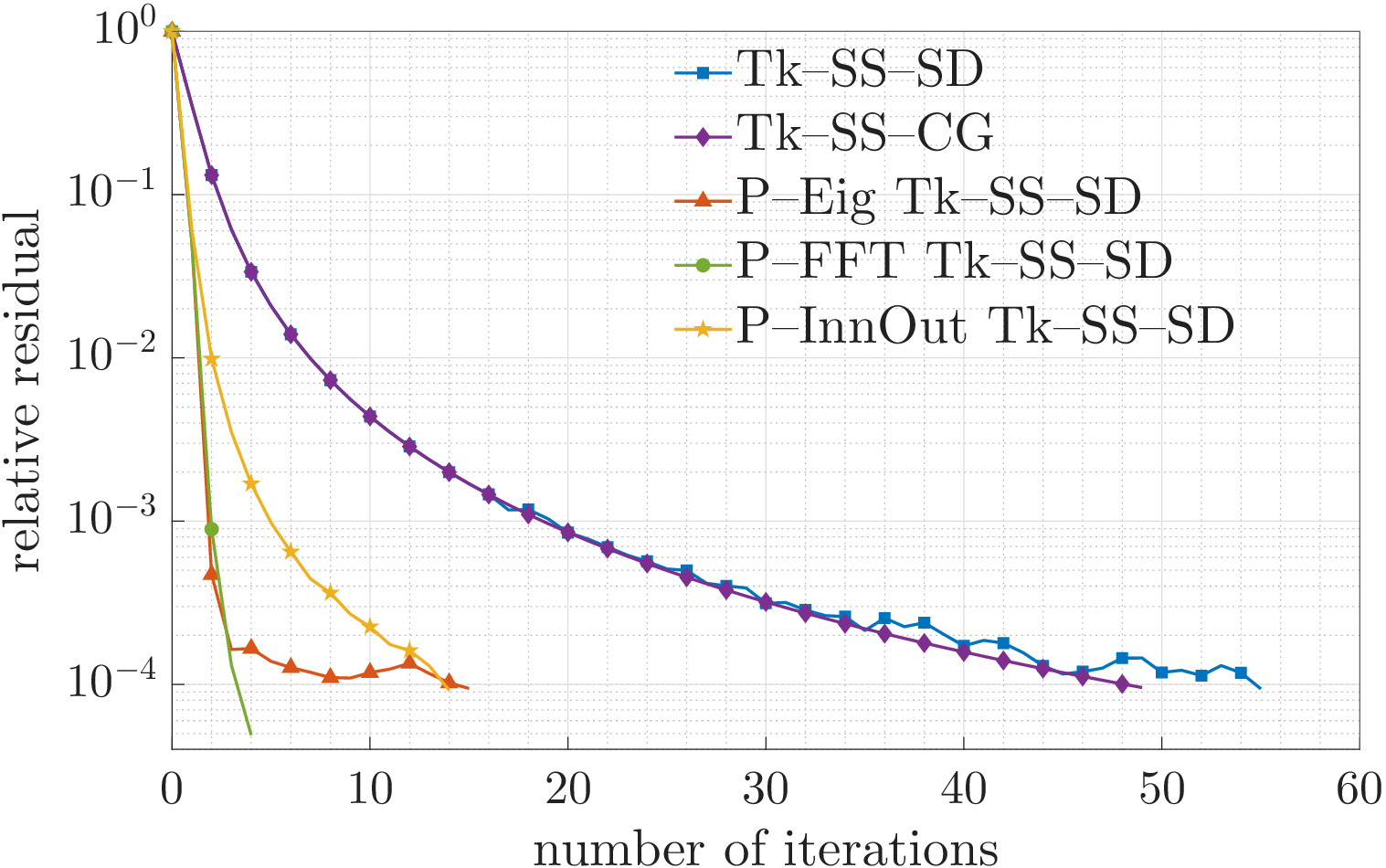}
\caption{Convergence history for $n=1000$, {\tt tol}=$10^{-4}$ and
${\tt maxrank}=10$ (left) and ${\tt maxrank}=15$ (right).}
\label{fig:conv_exp5_n1000}
\end{figure}

\section{Conclusions}\label{sec:conclusions}
We have presented a generalization to tensor equations of the recently introduced
subspace conjugate gradient
 method,  with symmetric and positive definite operators among tensor 
spaces. 
We have also extended the initial idea so as to include a new tensor-based version of the
classical steepest descent procedures, which
naturally adapts to this setting, while somewhat lowering the computational.
To enhance the algorithm performance, we 
have introduced a mixed-precision strategy and we 
have considered different preconditioning strategies. Our numerical experiments 
illustrate that the new class of methods is computationally competitive with 
respect to the well established AMEn.

\section{Acknowledgements}
All authors are members of INdAM, Research Group GNCS.
The work of MI and VS
was partially supported by the
European Union - NextGenerationEU under the National Recovery and
Resilience Plan (PNRR) - Mission 4 Education and research
- Component 2 From research to business - 
Investment 1.1 Notice Prin 2022 - DD N. 104 of 2/2/2022,
entitled “Low-rank Structures and Numerical Methods in Matrix
and Tensor Computations and their
Application”, code 20227PCCKZ – CUP J53D23003620006.
The work of LP is funded by the European Union under the 
National Recovery and Resilience Plan (PNRR) - Mission
4 - Component 2 Investment 1.4 “Strengthening research 
structures and creation of “National R\&D Champions'' on some 
Key Enabling Technologies `` DD N. 3138 of 12/16/2021 rectified 
with DD N. 3175 of 18/12/2021, code CN00000013 - CUP J33C22001170001.

\bibliographystyle{siamplain}
\bibliography{references}

\newpage
\section*{Appendix}
In this appendix we provide a more detailed description of some key implementations.
\begin{algorithm}[H]
	{
		\begin{algorithmic}[1]
			\Statex \textbf{Input:} two order-$d$ tensors in Tucker format, $\ttvec{x} = \mat{F}_1\otimes \cdots \otimes \mat{F}_d\,\ttvec{c}_{\ttvec{x}}$ and $\ttvec{y} = \mat{G}_1\otimes \cdots \otimes \mat{G}_d\,\ttvec{c}_{\ttvec{y}}$, a maximal allowed multilinear rank  of components equal to $\texttt{maxrank}$, a truncating tolerance $\delta$
			\Statex \textbf{Output:} Tucker approximation of the $\ttvec{z} = \ttvec{x} + \ttvec{y}$
			\smallskip
			\State Set $\ttvec{c}_{\ttvec{z}} = \texttt{tenblkdiag}(\ttvec{c}_{\ttvec{x}}, \ttvec{c}_{\ttvec{y}})$
			\For{$j=1,\ldots,d$}\label{alg:tucker-round:l1}
			\State Compute $\mat{U}_j, \mat{\Sigma}_j, \mat{V}_j$ by $\texttt{SVD}([\mat{F}_j, \mat{G}_j])$
			\State Set $t_{j}=\argmax_i{\sigma_i > \delta \|\vec{\sigma}\|_1}$ where $\vec{\sigma} = \text{diag}(\mat{\Sigma}_j)$
			\State Update $\ttvec{c}_{\ttvec{z}}'$ as $\ttvec{c}_{\ttvec{z}}'\times_j (\mat{\Sigma}_j\mat{V}_j^{\top})(\colon, 1\colon t_j)$
			\EndFor\label{alg:tucker-round:l6}
			\State Compute ${\ttvec{c}}'_{\ttvec{z}}, \{\mat{L}_j\}$ from $\texttt{ST-HOSVD}(\ttvec{c}_{\ttvec{z}}, \texttt{maxrank})$\label{alg:tucker-round:l7}
			\State Return ${\ttvec{c}}'_{\ttvec{z}}$ and $\{\mat{U}_j(\colon, 1\colon t_j)\mat{L}_j\}$\label{alg:tucker-round:l8}
		\end{algorithmic}    \caption{Tucker-rounding-sum.\label{alg:tucker-round}}
	}
\end{algorithm}

{\it  Tucker-rounding-sum.}
In terms of computational complexity, the rounding-sum algorithm
performs $d$ SVDs of $(n\times r+p)$ matrices truncated at rank $t$, assuming $n = \max\vec{n}$, $r = \max\vec{r}$, $p = \max\vec{p}$. 
Under the reasonable assumption that $r+ p \ll n$, this step requires $\bigO(dn(r+p)^2)$ flops. 
The $d$ TTM products to form $\ttvec{c}_{\ttvec{z}}$ requires $\bigO(\sum_{i=1}^{d}r^it^{d+1-i})$ flops, knowing that 
$t \le r+p$. 
The ST-HOSVD of $\ttvec{c}_{\ttvec{z}}$ costs $\bigO(t^{d+1})$ flops and the $d$ matrix-matrix product 
to form ${\mat{U}}_j{\mat{L}}_j$ cost $\bigO(dnrt)$ flops. The dominant computational term is given by the $d$ TTM products.

From a storage point of view, 
the largest tensor is $\ttvec{c}_{\ttvec{z}}$ which requires $\bigO((r+p)^d)$ units of memory and is 
the dominant storage cost.

\begin{algorithm}[H]
	{
		\begin{algorithmic}[1]
			\Statex \textbf{Input:}  a tensor operator, $\ttmat{A}$, formed by $\{\mat{A}_{j,h}\}$ for $h=1,\dots,\ell$ and $j=1,\dots,d$, an order-$d$ tensors in Tucker format, $\ttvec{x} = \mat{F}_1\otimes \cdots \otimes \mat{F}_d\,\ttvec{c}_{\ttvec{x}}$, a maximal allowed multilinear rank  of components equal to $\texttt{maxrank}$, a truncating tolerance $\delta$
			\Statex \textbf{Output:} Tucker approximation of the $\ttvec{y} = \ttmat{A}\ttvec{x}$
			\smallskip
			\State Set $\ttvec{c}_{\ttvec{y}} =\ttvec{c}_{\ttvec{x}}$
			\For{$h=1,\ldots,\ell$}
			\State Set $\ttvec{c}_{\ttvec{y}} = \texttt{tenblkdiag}(\ttvec{c}_{\ttvec{x}}, \ttvec{c}_{\ttvec{y}})$
			\State Set $\mat{G}_j = \mat{A}_{j,h}\mat{F}_j$ for $j=1, \dots, d$
			\For{$j=2,\ldots,d$}
			\State Set $\mat{G}_j= [\mat{A}_{j,h}\mat{F}_j, \mat{G}_j]$
			\State Compute $\mat{U}_j, \mat{\Sigma}_j, \mat{V}_j$ by $\texttt{SVD}(\mat{G}_j)$
			\State Set $t_{j}=\argmax_i{\sigma_i > \delta \|\vec{\sigma}\|_1}$ where $\vec{\sigma} = \text{diag}(\mat{\Sigma})$
			\State Update $\mat{G}_j$ as $\mat{U}_j(\colon, 1\colon t_j)$
			
			\State Update $\ttvec{c}_{\ttvec{y}}$ as $\ttvec{c}_{\ttvec{y}}\times_j (\mat{\Sigma}_j\mat{V}_j^{\top})(\colon, 1\colon t_j)$
			\EndFor
			\EndFor
			\State Compute ${\ttvec{c}'}_{\ttvec{y}}, \{\mat{L}_k\}$ from $\texttt{ST-HOSVD}(\ttvec{c}_{\ttvec{y}}, \texttt{maxrank})$
			\State Return ${\ttvec{c}'}_{\ttvec{y}}$ and $\{\mat{U}_j(\colon, 1\colon t_j)\mat{L}_j\}$
		\end{algorithmic}    \caption{Tucker-rounding-product.\label{alg:tucker-roundAx} }
	}
\end{algorithm}

{\it Tucker-rounding-product.}
From a computational complexity point of view, the matrix-matrix products $\mat{A}_{j,h}\mat{F}_j$ are in total $d\ell$, accounting for $\bigO(d\ell nr^2)$ flops, assuming $n = \max\vec{n}$, $r = \max\vec{r}$ and $r\ll n$. The size of $\mat{G}_j$ changes over the iterations, but we can estimate that it never exceeds $r\ell$ thanks to the truncated SVDs at rank $t\le r\ell$. Thus, we perform $d(\ell-1)$ SVDs of $(n\times r\ell)$ matrices, requiring $\bigO(d\ell n(r\ell)^2)$ flops. Each of the $\ell-1$ TTM products to update $\ttvec{c}_{\ttvec{y}}$ along all modes requires $\bigO(\sum_{i=1}^{d}r^it^{d+1-i})$ flops. The total flops for the TTM products can be estimated as $\bigO\bigl(\ell(r\ell)^{d+1}\bigr)$

The ST-HOSVD of $\ttvec{c}_{\ttvec{y}}$ costs $\bigO(t^{d+1})$ flops, which can be estimated as $\bigO((r\ell)^{d+1})$. The $d$ matrix-matrix product $\mat{U}_j(\colon, 1\colon t_j)\mat{L}_j$ cost $\bigO(dnrt)$ flops. The dominant computational term is given by the $(\ell-1)$ TTM products along the $d$ modes.

From a storage point of view, 
the largest tensor is $\texttt{tenblkdiag}(\ttvec{c}_\ttvec{x}, \ttvec{c}_\ttvec{y})$ which requires $\bigO((2r)^d)$ units of memory, representing the dominant storage cost. 

\begin{algorithm}[h]
	{
		\begin{algorithmic}[1]
			\Statex \textbf{Input:} a collection of diagonal matrices $\{\mat{D}_{h}\}_{{h=-q,\dots,q}}$ with $\mat{D}_{h} = {\rm diag}(\exp(t_h\bm{\Lambda}))$ and a coefficient vector $\vec{c}\in\R^{2q+1}$ defining $\bm{\Delta}_d^{-1}$, an order-$d$ tensors in Tucker format, $\ttvec{x} = \mat{U}_1\otimes \cdots \otimes \mat{U}_d\,\ttvec{c}_{\ttvec{x}}$, a maximal allowed multilinear rank  of components equal to $\texttt{maxrank}$
			\Statex \textbf{Output:} Tucker approximation of the $\ttvec{y} = \bm{\Delta}_d^{-1}\ttvec{x}$
			\smallskip
			\For{$j=1,\ldots,d$} 
			\State Set $\widehat{\mat{U}}_j = \texttt{DST-I}(\mat{U}_j)$ 
			\State Set $\mat{E}_j = [\mat{D}_{-q}\widehat{\mat{U}}_j, \dots, \mat{D}_{q}\widehat{\mat{U}}_j]$
			\State Set $\mat{Q}_j, \mat{R}_j =\texttt{QR}(\mat{E}_j)$
			\EndFor
			\State Set $\ttvec{c}'_{\ttvec{x}} = \bm{0}$
			\For{$h=-q,\ldots,q$}
			\State Set $\ttvec{c}'_{\ttvec{x}} = \ttvec{c}'_{\ttvec{x}} + c_h\mat{R}_{1,h}\otimes \mat{R}_{2,h} \otimes\mat{R}_{3,h}\ttvec{c}_{\ttvec{x}}$
			\EndFor
			\State Compute ${\ttvec{c}}_{\ttvec{y}}, \{\mat{L}_j\}$ from $\texttt{ST-HOSVD}(\ttvec{c}'_{\ttvec{x}}, \texttt{maxrank})$
			\State Set ${\mat{W}}_j = \texttt{iDST-I}(\widehat{\mat{U}}_j\mat{L}_j)$ 
			\State Return $\widehat{\ttvec{c}}_{\ttvec{y}}$ and $\{\mat{W}_j\}$
		\end{algorithmic}    \caption{Tucker-FFT-preconditioner.\label{alg:tucker-FTT} }
	}
\end{algorithm}

{\it Fast Fourier Transform preconditioning.}
The FFT-strategy to compute 
$\ttvec{y} = \ttmat{M}\ttvec{x}$ where $\ttvec{x} = \mat{U}_1\otimes\mat{U}_2\otimes\mat{U}_3\,\ttvec{c}_{\ttvec{x}}$ and $\ttvec{y} = \mat{W}_1\otimes\mat{W}_2\otimes\mat{W}_3\,\ttvec{c}_{\ttvec{y}}$
proceeds as follows. 
Firstly, the eigenvalues of $\bm{\Delta}_1$ are computed analytically as
$\vec{\lambda}(i) = 2 - 2\cos(\frac{i\pi}{n+1})$ and so are
the exponential matrices $\diag(\exp(t_h\vec{\lambda}))$ for $h=-q, \dots, q$. 
As mentioned in the text,  to compute $\ttvec{y} = \ttmat{M}\ttvec{x}$ in Tucker format
the Tucker factors of $\ttvec{x}$ are transformed with the DST-I,
i.e., $\widehat{\mat{U}}_{j} = \texttt{DST-I}(\mat{U}_j)$.
 
Let $\bm{\Lambda} = \text{diag}(\vec{\lambda})$, 
then to compute $\ttvec{x} = \exp\bigl(t_h(\bm{\Lambda} \oplus \bm{\Lambda}\oplus\bm{\Lambda}\bigr)\widehat{\ttvec{x}}$ 
we form the $3$ block matrices 
$\mat{E}_j = [\mat{E}_{j,-q}, \dots, \mat{E}_{j, q}]$ 
of size $(n\times (2q+1)r_j)$ where $\mat{E}_{j,h} = \diag(\exp(t_h\vec{\lambda}))\widehat{\mat{U}}_j$ for 
$j=1,\dots,3$ and $h=-q, \dots,q$. 
The matrices $\mat{E}_j$ are factorized by the 
{QR} algorithm, that is $\mat{E}_j = \mat{Q}_j\mat{R}_j$, 
where $\mat{Q}_j$ is of size $(n \times p_j)$ with $p_j = {\rm rank}(\mat{E}_j)$, and  $\mat{R}_j = [\mat{R}_{j,-q}, \dots, \mat{R}_{j, q}]$. 
The blocks of $\mat{R}_j$ are used to update the core of $\ttvec{x}$. 
In details, 
\[
	\ttvec{c}'_{\ttvec{x}} = \sum_{h= -q}^{q} c_h \mat{R}_{1,h}\otimes \mat{R}_{2,h} \otimes\mat{R}_{3,h}\ttvec{c}_{\ttvec{x}},
\]
which is a dense tensor of size $\vec{p}$. To limit memory usage, the dense tensor $\ttvec{c}'_{\ttvec{y}}$ is approximated at multilinear-rank equal to $\texttt{maxrank}$ in all its component by ST-HOSVD, obtaining ${\ttvec{c}}'_{\ttvec{x}} = \mat{L}_1\otimes \cdots \otimes \mat{L}_d\, {\ttvec{c}}_{\ttvec{y}}$, where ${\ttvec{c}}_{\ttvec{y}}\in\R^{\vec{m}}$, and $\mat{L}_h \in\R^{p_j  \times m_j}$ with $m_j  \le \texttt{maxrank}$ for $j  = 1, \dots, d$. 
  
Finally,  $\ttvec{y} = \ttmat{V}_h{\ttvec{x}'}$ is computed in Tucker-format using the iDST-I. 
In particular, we let the Tucker factors of $\ttvec{y}$ be
 $\mat{W}_j = \texttt{iDST-I}(\widehat{\mat{U}}_j\mat{L}_j)$ for  $j=1,\dots,3$, and its Tucker core 
be $\ttvec{c}_{\ttvec{y}}$, previously computed by ST-HOSVD. 
The entire procedure is summarized in Algorithm~\ref{alg:tucker-FTT}.

From the computational complexity viewpoint, the precomputation phase 
requires $\bigO(n)$ flops. Thus, assembling all parts
is cheaper than in the first algebraic approach, as the explicit eigenvectors formation
is omitted. 
Assuming now $d$ modes, applying the DST-I and iDST-I requires 
$\bigO(drn\log n)$ flops, respectively. 
Forming $d$ matrices $\mat{E}_j$ requires $\bigO(dqn)$ flops.
To assess the computational complexity of the {QR} factorization, let us
 assume that $(2q+1)r \ll n$ where $n=\max\vec{n}$ and $r=\max{\vec{r}}$. Then, 
the $d$ \texttt{QR} factorization needs $\bigO\bigl(dn(2qr^2)\bigr)$ flops. The computation of the $(2q+1)$ TTM products along all modes to form $\ttvec{c}'_{\ttvec{x}}$ costs $\bigO(\sum_{i=1}^{d}p^ir^{d+1-i})$ flops and its ST-HOSVD costs $\bigO(p^{d+1})$ flops. 
Assuming that $p\le 2qr$, the TTM products and the ST-HOSVD require 
$\bigO(2q(2qr)^{d+1})$ and $\bigO((2qr)^{d+1})$. Lastly, forming the inputs 
for the iDST-I requires $\bigO(dnrm)$ flops. Overall, the dominant cost is given by
the $2q+1$ TTM products along each of the $d$ modes. 

In terms of storage, the key difference with the purely algebraic
 strategy is given by the use of the $d$ matrices $\mat{E}_j$ of size $(n\times (2q+1)r_j)$. 
Indeed, in the eigenvalue-factorization strategy the \texttt{tucker-rounding-product} algorithm is
employed, which truncates the blocks every time they are concatenated, never forming such large matrices.  
On the other hand, this multiple truncation necessarily causes loss of information, that may 
cause the first preconditioner to perform less well.

\end{document}